\author[T. Hmidi]{Taoufik Hmidi}
\address{IRMAR, Universit\'e de Rennes 1\\ Campus de
Beaulieu\\ 35~042 Rennes cedex\\ France}
\email{thmidi@univ-rennes1.fr}
\author[J. Mateu]{Joan Mateu}
\address{Departament de Matem\`{a}tiques\\
Universitat Aut\`{o}noma de Barcelona\\
08193 Bellaterra, Barcelona, Catalonia} \email{ mateu@mat.uab.cat}
\author[J. Verdera]{Joan Verdera}
\address{Departament de Matem\`{a}tiques\\
Universitat Aut\`{o}noma de Barcelona\\
08193 Bellaterra, Barcelona, Catalonia} \email{ jvm@mat.uab.cat}
\newtheorem{teor}{Theorem}
\newtheorem*{co}{Corollary}
\newtheorem{lemma}{Lemma}
\newtheorem*{teo}{Crandall-Rabinowitz's Theorem}
\newtheorem*{sublemma}{Sublemma}
\theoremstyle{definition}
\newtheorem*{remark}{Remark}
\newtheorem*{gracies}{Acknowledgements}
\newcommand{\C}{\mathbb{C}}
\newcommand{\R}{\mathbb{R}}
\newcommand{\T}{\mathbb{T}}
\begin{document}
\title[Boundary regularity]
{Boundary regularity of rotating vortex patches} \maketitle
\begin{abstract}
We show that the boundary of a rotating vortex patch (or V-state, in
the terminology of Deem and Zabusky) is $C^{\infty}$ provided the
patch is close to the bifurcation circle in the Lipschitz norm. The
rotating patch is also convex if it is close to the bifurcation
circle in the $C^2$ norm. Our proof is based on Burbea's approach to
$V$-states.
\end{abstract}
\tableofcontents

\section{Introduction}
 The motion of a two dimensional inviscid incompressible fluid is governed by Euler equations
\begin{equation}\label{Euler}
\left\{ \begin{array}{ll}
\partial_{t} v(z,t) +(v\cdot \nabla  v)(z,t)=-\nabla p(z,t),\quad z\in\mathbb{C},\quad  t > 0,\\
\operatorname{div}v=0,\\
v(z,0)=v_{0}(z),
\end{array} \right.
\end{equation}
where $v(z,t)$ is the velocity field at the point $(z,t) \in
\C\times\R_+$ and $p$ is the pressure, which is a scalar function.
The operators $v \cdot \nabla$ and $\operatorname{div}$ are defined
by
$$
v\cdot\nabla = v_1 \partial_1+ v_2 \partial_2\quad\hbox{and}\quad
\operatorname{div} v =
\partial_1 v_1 +\partial_2 v_2.
$$
The velocity field $v$ is  divergence free because the fluid is
incompressible. In dimension two the vorticity is given by the
scalar $\omega= \partial_1 v_2 -
\partial_2 v_1.$ One can recover the velocity from the vorticity by means of the Biot-Savart law.
Indeed, identifying $v=(v_1,v_2)$ with $v_1+iv_2$  and performing a
simple calculation one gets
$$
2\partial v=i\omega,\quad\hbox{with}
\quad\partial:=\partial_z=\frac12(\partial_1-i\partial_2).
$$ Since $z\mapsto \frac{1}{\pi \, \overline{z}}$
%$$
%\frac{1}{\pi \overline{z}}
%$$
is the fundamental solution of the complex operator $\partial$ we
get the Biot-Savart law
\begin{equation}\label{Biot}
v(z,t)=\frac{i}{2\pi} \int_\C
\frac{\omega(\zeta,t)}{\overline{z}-\overline{\zeta}}\,dA
(\zeta),\quad z \in \C,
\end{equation}
$dA$ being planar Lebesgue measure. Taking $\operatorname{curl}$ in
the first equation of the system \eqref{Euler} one obtains the
vorticity equation

\begin{equation}\label{vorticity}
\left\{ \begin{array}{ll}
\partial_{t}\omega +v\cdot \nabla\, \omega=0,\\
\omega(z,0) =\omega_{0}(z),
\end{array} \right.
\end{equation}
where  $\omega_0$ denotes the initial vorticity and $v$ is given by
the Biot-Savart law \eqref{Biot}. Equation \eqref{vorticity} simply
means that the vorticity is constant along particle trajectories.
Under mild smoothness assumptions the Euler system  is equivalent to
the vorticity formulation \eqref{vorticity}-\eqref{Biot}.
 A convenient reference for these results is
\cite[Chapter 2]{BM}.

 It is a deep fact, known as
Yudovich Theorem, that the vorticity equation has a unique global
solution in the weak sense when the initial vorticity $\omega_0$
lies in $L^1 \cap L^\infty$. See for instance \cite[Chapter 8]{BM}).
A vortex patch is the solution of \eqref{vorticity} with initial
condition the characteristic function of a bounded domain $D_0.$
Since the  vorticity is transported along trajectories, we get
\mbox{that   $\omega(z,t)={\chi}_{D_t}(z),$} where $D_t=X(D_0,t)$ is
the image  of $D_0$ by the flow. Recall that the flow $X$ is the
solution of the nonlinear integral equation
$$
X(z,t)=z+\int_{0}^t v(X(z, \tau),\tau)\, d\tau,\quad z\in \C, \,
t\geq 0.
$$
In the special case where $D_0$ is the open unit disc the vorticity
is radial and thus we get a steady flow. In particular, $D_t =D_0,
\;  t\geq0,$ and  the particle trajectories are circles centered at
the origin.  A remarkable fact discovered by Kirchoff is that when
the initial condition is the characteristic function of an ellipse
centered at the origin, then the domain $D_t$ is a rotation of
$D_0.$ Indeed, $D_t = e^{i t \Omega}\,D_0$, where the angular
velocity $\Omega$ is determined by the semi-axis $a$ and $b$ of the
initial ellipse through the formula $\Omega = ab/(a+b)^2.$ See for
instance \cite[p.304]{BM} and \cite [p.232]{L}.

A rotating vortex patch or V-state is a domain $D_0$ such that if
$\chi_{D_0}$ is the initial condition of the vorticity equation,
then the region of vorticity $1$ rotates with constant angular
velocity around its center of mass, which we assume to be the
origin. In other words, $D_t = e^{i t \Omega}\,D_0$ or, which is the
same, the vorticity at time $t$ is given by
\begin{equation*}\label{Vstate}
\omega(z,t) = \chi_{D_0}(e^{-i t \Omega} z), \quad z \in \C, \quad t
> 0.
\end{equation*}
Here the angular velocity $\Omega$ is a positive number associated
with $D_0.$

To the best of our knowledge the ellipse is the only V-state for
which a closed formula is known. Deem and Zabusky \cite{DZ} wrote an
equation for the V-states and solved it numerically. They found
V-states with $m$-fold symmetry for each integer $m \geq 2$. A
domain is said to be $m$-fold symmetric if is invariant under a
rotation of angle $2 \pi/m.$ One may view such a domain as being the
union of $m$ leaves each of which can be obtained from a given one
by rotating it of an angle of the form $p (2 \pi/m) ,$ for some
integer $p.$ It is extremely interesting to look at the pictures of
$m$-fold symmetric V-states in  \cite{WOZ} : one can see domains
with smooth boundary, which evolve with certain associated
parameters to produce in the limit domains whose boundaries have
corners at right angles.

Burbea \cite{B} gave a mathematical proof of the existence of
$m$-fold symmetric V-states using bifurcation from the circle
solution . His approach is, in Aref's words, elegant and deep
\cite[p. 346]{A}. It is indeed so. He finds an equation for the
V-states and then uses conformal mapping to rewrite the equation in
a functional analytic framework in which bifurcation theory can be
applied. Unfortunately the proof has a gap, which occurs when the
space to which bifurcation theory should be applied is set up. The
suggestion made in \cite[p.8]{B} of using the standard Hardy space
does not work. One reason is that the operator $Q(f)$ in
\cite[p.8]{B} involves one derivative of $f$ and functions in the
Hardy space do not necessarily have derivatives. Another reason is
that one needs a space which guarantees that small analytic
perturbations of the identity in the space are conformal. A space
which fulfils the preceding requirements is the space of Lipschitz
functions. However, for technical reasons, the space of Lipschitz
functions is not suitable for our purposes and has to be replaced by
the smaller space of functions with first order derivatives
satisfying a H\"{o}lder condition of order $\alpha$, $0< \alpha <1.$
The reader will find an exposition of Burbea's approach in section 2
and a complete proof of the existence of $m$- fold symmetric
V-states in section 3. It is our impression that this beautiful and
striking theorem deserves to be more widely known than it appears to
be now. In section 4 we prove our main result, which states that if
a bifurcated $V$-state is close enough to the circle in the
Lipschitz sense then its boundary is of class $C^{\infty}.$ We also
show that a bifurcated V-state is convex if it is close enough to
the circle in the $C^2$ norm. There is a dark zone, where the
V-state has boundary of class $C^{1+\alpha}$ for some positive
$\alpha$ but it is not close enough to the circle, in which the
actual smoothness properties of the boundary are unknown. Also the
nature of the singularities of the ``limiting" V-states of Wu,
Overman and Zabusky (\cite{WOZ}) are not well understood at all.

 We adhere to the convention of denoting by $C$ a constant
independent of the relevant parameters under consideration. The
constant may change its actual value at different occurrences.

{\bf{Added on October 11, 2014.}}   While working on the manuscript \cite{HHMV} we realized that 
some of the proofs presented in this paper can be substantially simplified. For instance, subsection 3.3 on
real Fourier coefficients has been now reduced to just a remark (see page 11).  Moreover, most of the technical work
which has to be done when checking that Crandall-Rabinowitz's Theorem can be applied is simplified
if one works with the differentiated form of Burbea's equation, which is (8) in page 5 of \cite{HHMV}.  See also
 \eqref{eqq}.  We have kept here the original proofs.  The alternative simpler arguments can be found in \cite{HHMV}. The reader should be aware
 that in \cite{HHMV} one works with domains with two boundary components and so the presentation has to be adapted to the easier
 simply connected case.

\section{Burbea's approach to V-states}
We first derive an equation for simply connected vortex patches
which have smooth boundary for all times. Consider two
parametrizations of the boundary $\partial D_t$ of the patch at time
$t$, say $z(\alpha,t)$ and $\eta(\beta,t)$, and assume that they are
proper in the sense that they establish a homeomorphism between the
interval of definition of the parameters with the extremes
identified and $\partial D_t.$ Assume also that they are
continuously differentiable as functions of the parameter and time.
Then there exists a change of parameters $\alpha(\beta,t)$ such that
$\eta(\beta,t) = z(\alpha(\beta,t),t)$ for all $\beta$ and $t$ and
so we have
\begin{equation*}\label{para}
\frac{\partial \eta}{\partial t} (\beta,t)=\frac{\partial
z}{\partial \alpha} (\alpha,t) \frac{\partial\alpha
(\beta,t)}{\partial t} +\frac{\partial z}{\partial t} (\alpha,t).
\end{equation*}
Since $\frac{\partial z}{\partial\alpha}(\alpha,t)$ is a tangent
vector to the boundary at the point $z(\alpha,t)$ and
$\frac{\partial \alpha}{\partial t}(\beta,t)$ is a scalar we
conclude that
\begin{equation}\label{boundary}
\frac{\partial \eta}{\partial t} (\beta,t)\cdot \vec{n}
=\frac{\partial z}{\partial t} (\alpha,t) \cdot \vec{n}
\end{equation}
where $\vec n$ is the exterior unit normal vector at the point
$z(\alpha,t)=\eta(\beta,t)$ and the dot stands for scalar product in
$\R^2 = \C.$ Thus the quantity in \eqref{boundary} does not depend
on the parametrization  and represents the speed of the
boundary in the normal direction. On the other hand
$v(z(\alpha,t),t)\cdot \vec n$ is the normal component of the
velocity of a particle which is located at the point $z(\alpha,t)$ at time
$t.$ Since the boundary is advected by the velocity $v$ we get the
equation
\begin{equation}\label{vortex}
\frac{\partial z}{\partial t} (\alpha,t)\cdot \vec{n} =v
(z(\alpha,t),t) \cdot \vec{n}
\end{equation}
which describes the motion of the boundary of the patch.

Now we introduce the stream function
\begin{equation}\label{stream}
\psi(z,t)=\left ( \frac{1}{2\pi} \log |\cdot| *
\chi_{D_{t}}\right)(z).
\end{equation}
Clearly $\overline{\partial} \log |z|^2 = 1/ \overline{z},$ where
$\overline{\partial} = \partial/ \partial \overline{z}.$  Hence
\begin{equation*}\label{distream}
2i \overline{\partial}\psi (z,t) =\frac{i}{2\pi} \left(
\frac{1}{\overline{z}}  *\chi_{D_{t}}\right) (z)= v(z,t)
\end{equation*}
and
\begin{eqnarray*}\label{dstream}
\nonumber(v \cdot \vec{n})(z(\alpha,t),t)& =& (\nabla^\perp \psi\cdot \vec{n})(z(\alpha,t),t)\\
\nonumber& =&-(\nabla
\psi \cdot \vec{\tau})(z(\alpha,t),t)\\
&=&-\frac{d\psi}{ds}(z(\alpha,t),t).
\end{eqnarray*}
where $\vec \tau$ is the unit tangent vector and $s$ is the
arc-length parameter of the curve $\partial D_t$. Therefore
\eqref{vortex} becomes
\begin{equation}\label{vortex2}
\frac{d \psi}{ds}(z(\alpha,t))= - \frac{\partial
z(\alpha,t)}{\partial t} \cdot \vec n .
\end{equation}
Assume now that the patch rotates with angular velocity $\Omega$.
Let $z_0(\alpha)$ be a proper continuously differentiable
parametrization of $\partial D_0$ and set $z(\alpha,t)= e^{i \Omega
t} z_0(\alpha),$ which is a proper parametrization of $\partial
D_t.$ Then
$$
\frac{\partial z}{\partial t}(\alpha,t)= i \Omega \,z(\alpha,t)
$$
and \eqref{vortex2} becomes
\begin{equation}\label{rvortex}
\frac{d\psi} {d s} (z(\alpha,t),t)=-i\Omega z(\alpha,t)\cdot
\vec{n}=\Omega z(\alpha,t)\cdot \vec{\tau}.
\end{equation}
Taking $\alpha=s,$ the arc-length parameter on $\partial D_0,$ we
obtain
\begin{equation*}\label{ds}
\frac{d}{ds} |z(s,t)|^{2}=2\operatorname{Re} (\overline{z(s,t)}
\vec{\tau})=2z(s,t)\cdot \vec{\tau}
\end{equation*}
so that, by \eqref{rvortex},
\begin{equation*}\label{rvortex2}
\frac{d\psi}{ds} (z(s,t),t)=\frac{\Omega}{2} \frac{d}{ds}
|z(s,t)|^{2}
\end{equation*}
and integrating with respect to $s$ yields, for a certain constant
$c(t)$ depending on $t,$
\begin{equation}\label{rvortex3}
\psi (z,t)=\frac{\Omega}{2} |z|^{2}+c(t),\quad z\in\partial D_{t}.
\end{equation}
Since the steps can be reversed this is the equation of V-states.

The goal now is to use conformal mappings to ``parametrize"
V-states. For that one needs to modify the preceding equation to get
a form more amenable to the use of analytic functions. Fix $t$ and
take derivatives in \eqref{rvortex3} with respect to $s$ on
$\partial D_t.$  We just get a restatement of \eqref{rvortex}, that
is,
\begin{equation}\label{ds2}
2\operatorname{Re} \left( \frac{\partial\psi}{\partial z} (z(s,t),t)
z'(s,t)\right)= \operatorname{Re} \left(\Omega
\overline{z(s,t)}z'(s,t)\right)
\end{equation}
where the prime means derivative with respect to $s.$ By the
generalized Cauchy formula (which follows from a direct application
of Green-Stokes) one has
\begin{equation}\label{gcauchy}
\overline{z} =\frac{1}{2\pi i} \int_{\partial
D_{t}}\frac{\overline{\zeta}}{\zeta-z} \,d\zeta+\frac{1}{\pi}
\int_{D_{t}} \frac{dA(\zeta)}{z-\zeta},\quad z\in D_{t}.
\end{equation}
Taking the $\partial = \partial / \partial z $ derivative in
\eqref{stream} and applying \eqref{gcauchy}
\begin{equation*}\label{dpsi}
4\frac{\partial\psi}{\partial z} (z,t)=\frac{1}{\pi} \int_{D_{t}}
\frac{dA(\zeta)}{z-\zeta} =\overline{z}-\frac{1}{2\pi i}
\int_{\partial D_{t}} \frac{\overline{\zeta}}{\zeta-z} \,d\zeta,
\quad z\in D_{t}.
\end{equation*}
The first identity above implies that $z\mapsto\frac{\partial
\psi}{\partial z} (z,t)$ extends continuously to the closed
\mbox{domain $\overline{D_t}$,} since the Cauchy integral of a
bounded compactly supported function is quasi Lipschitz (its modulus
of continuity is $ O(\delta |\log \delta|), \;\delta < 1/2$). Thus
the same happens to the Cauchy integral of the function
$\overline{\zeta}$ on $\partial D_t.$ Hence \eqref{ds2} becomes,
with $\lambda = 1-2 \Omega,$
\begin{equation*}\label{rvortex4}
\operatorname{Re} \left(\lambda \overline{z}\, z' (s,t)
-\frac{1}{2\pi i} \int_{\partial
D_{t}}\frac{\overline{\zeta}}{\zeta-z} \,d\zeta \,
z'(s,t)\right)=0,\quad z\in\partial D_{t},
\end{equation*}
where the integral over $\partial D_{t}$ for $z \in \partial D_{t}$
should be understood as the limit as $w \in D_{t}$ tends to $z$ of
the corresponding integral for $w.$

 Integrating with respect to $s$ on $\partial D_t$
we conclude that, for some constant $c(t)$ depending on $t,$
\begin{equation}\label{rvortex005}
\lambda |z|^{2} +2 \operatorname{Re} \frac{1}{2\pi i} \int_{\partial
D_{t}} \overline{\zeta}\log \left(1-\frac{z}{\zeta}\right)
\,d\zeta=c(t),\quad z\in\partial D_{t}.
\end{equation}
Remember that the origin belongs to $D_t$ and thus, for each $\zeta
\in \partial D_t,$ there exists a branch of the logarithm of
$1-z/\zeta$ in $D_t \setminus \{\zeta \}$ taking the value $0$ at
$z=0.$  Also fixed $z \in
\partial D_t$ there exists a branch of the logarithm of $1-z/\zeta$
in $\C_{\infty} \setminus {D_t}\cup\{z\}$ taking the value $0$ at
$\zeta=\infty.$ Therefore the integral in \eqref{rvortex005} exists
for each $z \in \partial D_t$ and defines a continuous function on
$\partial D_t.$ Later on we will differentiate this integral with
respect to $z$ on $\partial D_t.$

Notice that the equation  \eqref{rvortex005}  is invariant by
rotations and dilations. Consequently, using the fact
$D_t=e^{it\Omega}D_0$ and performing a change of variables equation
\eqref{rvortex005}  reduces to
\begin{equation}\label{rvortex5}
\lambda |z|^{2} +2 \operatorname{Re} \frac{1}{2\pi i} \int_{\partial
D_{0}} \overline{\zeta}\log \left(1-\frac{z}{\zeta}\right)
\,d\zeta=c,\quad z\in\partial D_{0}.
\end{equation}
We have used the fact $c(t)$ does not depend on $t$ since the left
hand side of \eqref{rvortex5} is independent on the time variable.
Equation \eqref{rvortex5} is Burbea's equation for simply connected
V-states.

We introduce now conformal mappings. Let $E_{D_0} = \C_{\infty}
\setminus \overline{D_0}$ be the exterior of $D_0$ and
$E_{\triangle} = \C_{\infty} \setminus \overline{\triangle}$ the
exterior of the unit disc $\triangle = \{z : |z|< 1\}.$  Let $\Phi$
be a conformal mapping of $E_\Delta$ onto $E_{D_0}$ preserving the
point at $\infty.$  This mapping can be expanded in $E_\Delta$ as
$$
\Phi(z) = a \Big(z+ \sum_{n\geq0}\frac{a_n}{z^n}\Big)
$$
for some complex number $a.$ Making a rotation in the independent
variable $z$ we can assume that $a$ is a positive number and then
dilating the domain we can further assume that $a=1.$ Following
Burbea we change variables in \eqref{rvortex5} setting $z=\Phi(w)$
and $\zeta=\Phi(\tau).$ Then \eqref{rvortex5} becomes
\begin{equation}\label{circle}
\lambda |\Phi(w)|^{2}+2 \operatorname{Re} \frac{1}{2\pi i}
\int_{\mathbb{T}} \overline{\Phi(\tau)} \log
\left(1-\frac{\Phi(w)}{\Phi(\tau)}\right) \Phi'
(\tau)\,d\tau=c,\quad w\in\mathbb{T}.
\end{equation}
Recall that we are assuming $\partial D_0$ to have rectifiable
boundary. It is then a classical result that $\Phi$ can be extended
continuously to the unit circle $\T = \{z \in \C : |z|=1\}$ and the
extension is absolutely continuous, so that $\Phi'(w)$ exists for
almost all $w \in \T$ and the resulting function is in $L^1(\T)$
(see \cite{P}).  Later on we will work with a $\Phi$ whose extension
to $\T$ is of class $C^{1+\alpha}(\T)$ so that $\Phi'$ will be
H\"{o}lder continuous of order $\alpha$ on $\T.$ To simplify the
notation, set
\begin{equation}\label{essa}
\sigma(\Phi) (w) =\frac{1}{2\pi i} \int_{\mathbb{T}}
\overline{\Phi(\tau)} \log
\left(1-\frac{\Phi(w)}{\Phi(\tau)}\right)\Phi' (\tau)\,d\tau,\quad
w\in\mathbb{T}
\end{equation}
and
\begin{equation}\label{mitjana}
m(\Phi,\lambda)=\frac{1}{2\pi} \int_{\mathbb{T}} (\lambda |\Phi
(w)|^{2}+2\operatorname{Re} \sigma(\Phi)(w))\,|dw|,
\end{equation}
where $|dw|$ denotes the length measure on the circle. Hence
\eqref{circle} is
\begin{equation}\label{circle1}
\lambda |\Phi(w)|^{2} +2\operatorname{Re} \sigma(\Phi)(w) -
m(\Phi,\lambda)=0,\quad w\in\mathbb{T}.
\end{equation}
It is more convenient to set $\Phi(z)=z + f(z)$ with $f$ analytic on
$E_\Delta$ and define the operator
\begin{equation}\label{ops1}
S(f)(w)=\sigma(\Phi) (w).
\end{equation}
That this is a good point of view is confirmed by the fact that
\begin{equation}\label{zero}
S(0) (w)=\frac{1}{2\pi i} \int_{\mathbb{T}} \log
\left(1-\frac{w}{\tau}\right) \frac{d\tau}{\tau}=0,\quad
w\in\mathbb{T},
\end{equation}
because the integrand is analytic on $E_\triangle$ and has a double
zero at $\infty.$ Thus \eqref{circle1} is satisfied for $\Phi(z)=z$
and each $\lambda.$ Now define
\begin{equation}\label{circle001}
F(\lambda,f)(w):=\lambda |w+f(w)|^{2} +2\operatorname{Re} {S}(f)(w)
- m(\hbox{I}+f,\lambda),\quad w\in\mathbb{T},
\end{equation}
where $I$ stands for the identity function. Clearly Burbea's
equation can be rewritten as
$$
F(\lambda,f)=0.
$$
One has
$$
 F(\lambda,0)= 0,\quad \lambda \in \R,
$$
which simply says that the disc satisfies Burbea's equation
\eqref{rvortex5} for each $\lambda$. Burbea's idea at this point is
to apply bifurcation theory in order to prove the existence of
$m$-fold V-states. We will apply in the next section
Crandall-Rabinowitz's Theorem whose original statement in
\cite[p.325]{CR} is included below for the reader's convenience. For
a linear mapping $L$ we let $N(L)$ and $R(L)$ stand for the kernel
and range of $L$ respectively. If $Y$ is a vector space and $R$ is a
subspace, then $Y/R$ denotes the quotient space.
\begin{teo}\label{CR}
Let $X$, $Y$ be Banach spaces, $V$ a neighborhood of $0$ in $X$ and
$$
F\colon (-1,1) \times V\to Y
$$
have the properties
\begin{enumerate}
\item[(a)] $F(t,0)=0$ for any $|t|<1$.
\item[(b)] The partial derivatives $F_{t}$, $F_{x}$ and $F_{tx}$ exist and are continuous.
\item[(c)] $N(F_{x}(0,0))$ and $Y/R(F_{x}(0,0))$ are one-dimensional.
\item[(d)] $F_{tx}(0,0) x_{0}\notin R(F_{x}(0,0))$, where
$$
N(F_{x}(0,0))=\operatorname{span}\{x_{0}\}.
$$
\end{enumerate}
If  $Z$ is any complement of $N(F_{x}(0,0))$ in $X$, then there is a
neighborhood~$U$ of $(0,0)$ in $\mathbf{R} \times X$, an
interval~$(-a,a)$, and continuous functions $\varphi\colon (-a,a)\to
\mathbf{R}$, $\psi\colon (-a,a)\to Z$ such that  $\varphi(0)=0$,
$\psi(0)=0$ and
$$
F^{-1} (0)\cap U= \{(\varphi(\xi), \xi x_{0}+\xi \psi(\xi)):| \xi|
<a\}\cup \{(t,0): (t,0)\in U\}.
$$
\end{teo}

\section{Existence of $m$-fold V-states}
In this section we will apply Crandall-Rabinowitz's Theorem to prove
the existence of $m$-fold V-states for each integer $m \geq 2.$ For
$m=2$ one recovers the Kirchoff ellipses. As a by-product of this
formalism we get  a H\"{o}lderian boundary
 regularity result for the V-states which are close to a point of the bifurcation set $\{(\lambda,f)=(\frac1m,0), m = 2,3, \dots \}.$
 Later we will see how to establish the   $C^\infty$ regularity of the boundary of these V-states by using  hidden smoothing effects of the nonlinear
  \mbox{equation \eqref{circle1}.} First, we establish the following result.

\begin{teor}
Given $0 < \alpha < 1$ and $m=2,3,...$  there exists a curve of
$m$-fold rotating vortex patches with boundary  of class
$C^{1+\alpha}$ bifurcating at the circle solution.

More precisely, there exist $a
> 0$ and continuous functions  $\lambda \colon (-a,a)\to
\mathbf{R}$, $\psi\colon (-a,a)\to C^{1+\alpha}(\T)$ satisfying
$\lambda(0)= 1/m$, $\psi(0)=0$, such that the Fourier series of
$\psi(\xi)$ is of the form
$$
\psi(\xi)(w)= a_{2m-1}(\xi) \overline{w}^{2m-1}+ \dots + a_{n
m-1}(\xi) \overline{w}^{n m-1}+\dots, \quad w \in \T,
$$
and
$$
F(\lambda(\xi), \xi \overline{w}^{m-1}+ \xi \psi(\xi)(w))=0, \quad
\xi \in (-a,a).
$$
The mapping
$$
\Phi_\xi(z) = z (1+ \xi \frac{1}{z^{m}} + \xi a_{2m-1}(\xi)\,
\frac{1}{z^{2m}} + \dots + \xi a_{n m-1}(\xi)\, \frac{1}{z^{n
m}}+\dots )
$$
is conformal and of class $C^{1+\alpha} $on $\C \setminus \Delta,$
and the complement $D_\xi$ of $\Phi_\xi (\C \setminus \Delta)$ is an
$m$-fold  rotating vortex patch.
\end{teor}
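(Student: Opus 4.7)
The plan is to apply the Crandall-Rabinowitz Theorem to $F(\lambda,f)$ in a function-space setting that incorporates the desired $m$-fold symmetry. I would work with
$$X = \Big\{f\in C^{1+\alpha}(\T):\; f(w)=\sum_{k\geq 1}a_{km-1}\,\overline{w}^{km-1},\; a_{km-1}\in\R\Big\}$$
and
$$Y = \Big\{g\in C^\alpha(\T):\; g(w)=\sum_{k\geq 1}b_{k}\cos(km\theta),\; w=e^{i\theta},\; b_k\in\R\Big\}.$$
Supporting the Fourier coefficients of $f$ on indices of the form $km-1$ forces the equivariance $\Phi(e^{2\pi i/m}w)=e^{2\pi i/m}\Phi(w)$ for $\Phi=I+f$, so the resulting patch is automatically $m$-fold symmetric; $Y$ is the real, mean-zero, $m$-fold symmetric subspace of $C^\alpha(\T)$, and it is the natural target because of the reflection symmetry $\Phi(\overline{w})=\overline{\Phi(w)}$ carried by real Fourier coefficients. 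The first task is to check that $F$ sends a neighborhood $V$ of $0$ in $\R\times X$ into $Y$ and satisfies hypotheses (a)--(b) of Crandall-Rabinowitz.

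Hypothesis (a) is precisely \eqref{zero}. Regularity in $\lambda$ is trivial since $\lambda$ enters only through $\lambda|w+f(w)|^2$. The non-trivial part of (b) is the $C^1$-regularity of the nonlinear operator $S(f)$. I would decompose the integrand in \eqref{essa} into Cauchy-type pieces with logarithmic kernel and verify by classical H\"older estimates on the Cauchy transform that $S\colon X\to C^{1+\alpha}(\T)$ is $C^1$ and that $F_{\lambda f}$ is continuous. This step uses essentially that $f\in C^{1+\alpha}$ and not merely Lipschitz: one derivative on $f$ combined with a H\"older gain from the Cauchy integral keeps us inside the right space.

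The core of the argument is the spectral analysis of the linearization $L:=\partial_f F(1/m,0)$. For $h(w)=\overline{w}^n$ with $n\geq 1$, differentiating $\lambda|w+f(w)|^2$ at $f=0$ in the direction $h$ gives $2\lambda\,\mathrm{Re}(\overline{w}h(w))=2\lambda\cos((n+1)\theta)$, while the Fr\'echet derivative of $S$ at $0$ splits into three integrals corresponding to the variations of $\overline{\Phi(\tau)}$, of $\log(1-\Phi(w)/\Phi(\tau))$, and of the Jacobian $\Phi'(\tau)$. Using $\overline{\tau}=1/\tau$ on $\T$, expanding the logarithm in powers of $w/\tau$ (valid as $|w|\to 1^-$, the side singled out by the convention stated after \eqref{rvortex005}), and applying the residue theorem, the integrals coming from the logarithm and from the Jacobian vanish, and only the one from $\overline{\Phi(\tau)}$ contributes, yielding $DS(0)[\overline{w}^n](w)=-w^{n+1}/(n+1)$. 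Assembling these pieces and subtracting the mean produces
$$\partial_f F(\lambda,0)[\overline{w}^n]=2\Big(\lambda-\frac{1}{n+1}\Big)\cos((n+1)\theta).$$
Restricted to the basis $\{\overline{w}^{km-1}\}_{k\geq 1}$ of $X$, $\partial_f F(\lambda,0)$ is diagonal with eigenvalues $2(\lambda-1/(km))$. At $\lambda=1/m$ only the $k=1$ eigenvalue vanishes, so $N(L)=\mathrm{span}\{\overline{w}^{m-1}\}$ while $R(L)=\{\sum_{k\geq 2}b_k\cos(km\theta)\}$, which is closed in $Y$ since the remaining eigenvalues stay bounded away from zero; both $N(L)$ and $Y/R(L)$ are one-dimensional, giving (c). One more derivative in $\lambda$ yields $\partial_\lambda\partial_f F(1/m,0)[\overline{w}^{m-1}]=2\cos(m\theta)$, a generator of $Y/R(L)$ and hence not in $R(L)$, which is (d).

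Crandall-Rabinowitz then delivers continuous curves $\xi\mapsto\lambda(\xi)$ and $\xi\mapsto\psi(\xi)\in Z$, where $Z=\{\sum_{k\geq 2}a_{km-1}\overline{w}^{km-1}\}$ is the natural complement of $N(L)$ in $X$; by construction $\psi(\xi)$ has the Fourier form stated in the theorem. The final step is to verify that $\Phi_\xi=I+f_\xi$ is conformal on $E_\Delta$: for small $\xi$, $\Phi_\xi'$ is close to $1$ uniformly, $\Phi_\xi(\infty)=\infty$, and the $C^{1+\alpha}$-extension to $\T$ combined with the argument principle applied to $\Phi_\xi(z)-w_0$ on a large circle yields injectivity, so $\partial D_\xi=\Phi_\xi(\T)$ is a $C^{1+\alpha}$ Jordan curve and $D_\xi$ is an $m$-fold rotating vortex patch. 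The main obstacle I anticipate is the regularity analysis of $S$ in paragraph 2: showing that each piece of $DS$ lands in $C^{1+\alpha}(\T)$ (rather than merely $C^\alpha$) and that the dependence on $f$ is itself $C^1$ requires careful cancellations inside the Cauchy-type integrals rather than term-by-term estimates.
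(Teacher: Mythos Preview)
Your overall strategy matches the paper's: both apply Crandall--Rabinowitz to $F$ in a H\"older setting with real Fourier coefficients, compute the linearization as the diagonal multiplier $\overline w^{\,n}\mapsto 2\big(\lambda-\tfrac{1}{n+1}\big)\cos((n+1)\theta)$, and verify transversality via $\partial_\lambda\partial_f F(1/m,0)[\overline w^{\,m-1}]=2\cos(m\theta)$. Building the $m$-fold symmetry into $X$ and $Y$ from the start is a harmless shortcut; the paper first treats the unrestricted spaces and only afterwards passes to the symmetric subspaces $X_m,Y_m$.

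There is, however, a genuine gap in your choice of target space. You take $Y\subset C^\alpha(\T)$ while $X\subset C^{1+\alpha}(\T)$. With this mismatch the range of $L=\partial_f F(1/m,0)$ is \emph{not} of codimension one in $Y$. The multipliers $2\big(\tfrac1m-\tfrac{1}{km}\big)$ tend to the nonzero constant $2/m$, so inverting them does not gain a derivative: the preimage under $L$ of a generic $g\in C^\alpha$ lies only in $C^\alpha$, not in $X$. Consequently $R(L)$ sits inside an infinite-codimensional subspace of your $Y$, and hypothesis (c) fails. Your ``eigenvalues bounded away from zero'' heuristic would justify closed range if domain and target had the same regularity; here the one-derivative discrepancy between $X$ and $Y$ breaks it.

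The paper avoids this by taking $Y\subset C^{1+\alpha}(\T)$, the \emph{same} regularity as $X$, and then proving by hand that for every $g\in Y$ with vanishing $m$-th mode there exists $h\in X$ with $Lh=g$. That argument is not just ``eigenvalues bounded away from zero'': it writes $h=\tfrac{m}{2}\,wG+\tfrac{m^2}{2}\,wH$ with $G$ the Cauchy projection of $g$ and $H=G*K$ for an explicit $K\in L^1(\T)$, and invokes the boundedness of the Cauchy projection on $C^{1+\alpha}$. This is precisely why $C^1$ (or $C^0$) is forbidden---the Cauchy projection is unbounded there---and it is the substantive content behind hypothesis (c). Your closing paragraph already senses that the regularity bookkeeping is the hard part; the point to add is that the same bookkeeping forces $Y$ into $C^{1+\alpha}$, and that the codimension-one claim then requires the Cauchy-projection argument, not merely a spectral bound.
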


%{
%\begin{teor}
%\textcolor{blue}{
%Let $\alpha\in]0,1[$ and $m\in\mathbb{N}^*\backslash\{1\}$. There exists $\delta>0$ such that for every $\lambda\in]\frac1m, \frac1m+\delta[,$
%the equation \eqref{rvortex5} has a nontrivial m-fold solution $D_0$ with boundary in   $C^{1+\alpha}$.
%}
%\end{teor}}
The proof of this theorem requires some lengthly work, which will be
presented in several steps. We start by introducing  the spaces $X$
and $Y$ and then we will check all the assumptions of
Crandall-Rabinowitz's Theorem.

\subsection{The spaces $X$ and $Y$}
The choice of the spaces $X$ and $Y$ is a key point, which was
overlooked in \cite{B}. Before giving the complete description of
these spaces we need first to recall the definition of the
H\"{o}lder spaces $C^{n+\alpha}(\Omega).$ Let $\Omega$ be a nonempty
open set of $\mathbb{R}^d$ and  $0 < \alpha <1.$ We denote by
$C^\alpha(\Omega)$ the space of continuous functions $f$ such that
$$
\|f\|_{C^\alpha(\Omega)}:=\|f\|_{L^\infty}+\sup_{x\neq
y\in\Omega}\frac{|f(x)-f(y)|}{|x-y|^\alpha}<\infty,
$$
where $\|f\|_{L^\infty}$ stands for the supremum norm of $f$ on
$\Omega$.
%The limiting case $\alpha=1$ corresponds to Lipschitz
%class $Lip(1)$ which coincides with the Sobolev space
%$W^{1,\infty}(\Omega)$.
More generally, for a non-negative integer $n$ the H\"{o}lder space
$C^{n+\alpha}(\Omega)$ consists of  those functions of class $C^n$
whose $n-$th order derivatives are  H\"{o}lder continuous with
exponent $\alpha$ in $\Omega.$ It is equipped with the norm
$$
\|f\|_{C^{n+\alpha}(\Omega)}=\|f\|_{L^\infty}+\sum_{|\gamma|=n
}\|\partial^\gamma f\|_{C^\alpha(\Omega)}.
$$
We will also make use of the space $C^{1+\alpha}(\T)$ which is the
set of continuously differentiable functions $f$ on the unit circle
$\T$ whose derivatives satisfy a H\"{o}lder condition of order
$\alpha,$ endowed with the norm
$$
\|f\|_{C^{1+\alpha}(\T)}=\|f\|_{L^\infty}+\Big\|\frac{df}{dw}\Big\|_{\alpha},
$$
where $ \|\cdot\|_{\alpha} $ is the usual Lipschitz semi-norm of
order $\alpha$
\begin{equation*}
\|g\|_\alpha = \sup_{x\neq y\in\T} \frac{|g(x)-g(y)|}
{|x-y|^\alpha}\cdot
\end{equation*}
We define in a similar way the spaces $C^{n+\alpha}(\T)$, for each
positive integer $n$ and $\alpha\in ]0,1[.$ A word on the operator
$d/dw$ is in order. Any function  $f:\mathbb{T}\to\R$  can be
identified with a $2\pi-$periodic function $g:\R\to \R$ via the
formula
$$
f(w)=g(\theta),\quad w=e^{i\theta}.
$$
Therefore for a smooth function $f$ we get
$$
\frac{df}{dw}=-i e^{-i\theta} g^\prime(\theta).
$$
 It
will be more convenient in the sequel to work with $d/dw$ instead of
$d/d\theta.$  Since they differ only by a smooth factor it really
makes no difference. Notice that we have the identity
\begin{equation}\label{diff1}\frac{d \{\overline{f}\}}{dw} =
-\frac{1}{{w}^2} \overline{\frac{df}{dw}}.
\end{equation}
 On the other hand, if we denote by $C^{1+\alpha}_{2\pi}(\R)$ the subspace of
 $C^{1+\alpha}(\R)$ consisting of
 $2\pi$- periodic functions,  then we can identify  $C^{1+\alpha}(\mathbb{T})$ with $C^{1+\alpha}_{2\pi}(\R)$ and
$$
\|g\|_\alpha\approx \|f\|_{\alpha}.
$$
Let $C^{1+\alpha}_a(\Delta^c)$ be the space of analytic functions on
$\C_\infty \setminus \overline{\triangle}$ whose derivatives satisfy
a H\"{o}lder condition of order $\alpha$ up to $\T.$ This is also
the space of functions in $C^{1+\alpha}(\T)$  whose Fourier
coefficients of positive frequency vanish.

The space $X$ is defined as
\begin{equation}\label{expansion}
X=\Big\{f\in C^{1+\alpha}(\T); f(w) = \sum_{n=0}^{\infty} a_n
\overline{w}^n, \,w\in\mathbb{T},\;a_n\in\R, n \geq 0
\Big\}\end{equation} and coincides with the subspace of
$C^{1+\alpha}_a(\Delta^c)$ consisting of those functions in
$C^{1+\alpha}_a(\Delta^c)$ whose boundary values have real Fourier
coefficients.
 Later on we will modify
appropriately the space $X$ to get $m$-fold symmetry, but for now it
is simpler and clearer to work with this $X.$ The reader will
understand later why we require the $a_n$ to be real. For the time
being we just comment on the geometric meaning of this requirement.
If we set, for $f \in X,$ $\Phi(z)= z+f(z)$ and $\Phi$ happens to be
conformal on $\C_\infty \setminus \overline{\triangle}$, then the
complement in $\C_\infty$ of the closure of $ \Phi(\C_\infty
\setminus \overline{\triangle})$ is a simply connected domain $D$
symmetric with respect to the real axis. Conversely, if one starts
with a bounded simply connected domain $D$ symmetric with respect to
the real axis and $\Phi$ is the conformal mapping of the complement
of the closed unit disc onto the complement of $\overline{D},$ then
the coefficients in the expansion of $\Phi$ as a power series in
$1/z$ are real. Now, if one is given a domain $D$ with an axis of
symmetry containing the origin, after a rotation one can assume that
this axis is the real line. Domains with $m$-fold symmetry have an
axis of symmetry, so it is not really restrictive for our purposes
to work with functions with real Fourier coefficients.

Let $V$ stand for the open ball with center $0$ and radius $1$ in
$C^{1+\alpha}_a(\Delta^c)$. If $f$ is in $V$ then the function
$\Phi(z)=z+f(z)$ is analytic on $\{z : |z| > 1)$  and is injective
there. For, by the maximum principle,
$$
 \Big\|\frac{df}{dw}\Big\|_{L^\infty(\Delta^c)} = \sup \Bigg\{\frac{|f(z)-f(w)|} {|z-w|} :
|z|\geq1, |w|\geq1, z \neq w \Bigg\} : = \delta < 1,
$$
and hence
$$
|\Phi(z)-\Phi(w)| \geq |z-w|-|f(z)-f(w)| \geq (1-\delta)|z-w|, \quad
|z| \geq 1, |w| \geq 1.
$$
For $f \in V$ define $S(f)$ as in \eqref{ops1} and \eqref{essa},
where $\Phi(z)=z+f(z).$ The definition makes sense precisely because
$\Phi$ is injective and thus a branch of the logarithm of
$1-\Phi(w)/\Phi(\tau)$ can be defined taking the value $1$ at
$\infty$ (that is,  as $\tau \rightarrow \infty$), as we argued
after \eqref{rvortex5}. We define now a function $F(\lambda,f)$ on
$\R \times V$ by
\begin{equation}\label{efa}
F(\lambda,f)(w)= \lambda |w+f(w)|^2+ 2 \,\text{Re } S(f)(w)-
m(\hbox{Id}+f,\lambda),
\end{equation}
where $S(f)$ is as in \eqref{ops1} and $m(\hbox{Id}+f,\lambda)$ as
in \eqref{mitjana}. This  is the function $F$ to which we will apply
Crandall-Rabinowitz's Theorem.

We define now the space $Y$ as the subspace of $C^{1+\alpha}(\T)$
consisting of real-valued functions with zero integral and real
Fourier coefficients. More precisely,
\begin{equation}\label{Yspace}
Y:=\Big\{g\in C^{1+\alpha}(\T) : g(w)=\sum_{0 \neq
n\in\mathbb{Z}}a_n w^n, \, w\in\T \; \text{and} \; a_n=a_{-n}\in
\mathbb{R}, n
> 0 \Big\}.
\end{equation}

 Since we have subtracted the mean in \eqref{efa} it is
clear that $F(\lambda,f)$ is a real-valued function with zero
integral. To show that $F$ maps $X$ into $Y$ it remains to show that
$F(\lambda,f)$ belongs to   $C^{1+\alpha}(\T)$ and that its Fourier
coefficients are real.

\subsection{$F(\lambda,f)$ is in $C^{1+\alpha}(\T)$}  Since $F(\lambda, f)$ has zero integral
its norm in the space $C^{1+\alpha}(\T)$ is equivalent to the
$C^{\alpha}(\T)$ norm of its derivative
$w\mapsto\frac{dF}{dw}(\lambda,f)(w).$
 Now, since $C^{1+\alpha}(\T)$ is an algebra  the problem reduces to showing  that $w\mapsto\frac{dS(f)}{dw}(w) $ belongs to $C^{\alpha}(\T).$  For that
we need to compute the derivative of $S(f)(w)$ with respect to $w$
and this is done in the next lemma. Recall that $\Phi:\T\to\C$ is
bilipschitz (into the image) if for a positive constant $C$ one has
$$C^{-1}|\tau-\omega| \le |\Phi(\tau)-\Phi(\omega)|\le C
|\tau-\omega|, \quad \tau, \omega \in \T .$$

\begin{lemma}\label{deefa} For any bilipschitz function $\Phi:\T\to\C$  of class $C^1(\T)$ we have
\begin{equation}\label{dessa}
\frac{d}{dw} S(f)(w) = -\Phi'(w) \left(\frac{\overline{\Phi(w)}}{2}+
\hbox{p.v.\ }  \frac{1}{2 \pi i} \int_{\T}
\frac{\overline{\Phi(\tau)}\Phi'(\tau) }{\Phi(\tau)-\Phi(w)}\,d\tau
\right), \quad  w\in \T.
\end{equation}
\end{lemma}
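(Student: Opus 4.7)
The naive identity obtained by differentiating under the integral sign is
\[
\frac{d}{dw}S(f)(w) = -\Phi'(w)\cdot \frac{1}{2\pi i}\int_{\T} \frac{\overline{\Phi(\tau)}\,\Phi'(\tau)}{\Phi(\tau)-\Phi(w)}\,d\tau,
\]
but this is not justified because the kernel has a non-integrable $1/(\tau-w)$ singularity at $\tau=w$. My plan is to show that this integral makes sense as a principal value and that the true derivative differs from the naive one by a Plemelj--Sokhotski boundary term of the form $-\Phi'(w)\overline{\Phi(w)}/2$.

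I would interpret $S(f)(w)$ as the boundary value from the inside of an analytic function on the Jordan domain $D_0$ bounded by $\Phi(\T)$. For $z\in\overline{D_0}$ set
\[
T(z)=\frac{1}{2\pi i}\int_{\partial D_0}\overline{\zeta}\,\log\bigl(1-z/\zeta\bigr)\,d\zeta,
\]
with the branch of the logarithm taking value $0$ at $z=0$; this is unambiguous because $0\in D_0$. The substitution $\zeta=\Phi(\tau)$ identifies $T(\Phi(w))$ with $S(f)(w)$ for $w\in\T$. Since the integrand depends analytically on $z\in D_0$, $T$ is holomorphic in $D_0$ and differentiation under the integral gives
\[
T'(z)=-\frac{1}{2\pi i}\int_{\partial D_0}\frac{\overline{\zeta}}{\zeta-z}\,d\zeta,\qquad z\in D_0.
\]
The bilipschitz and $C^1$ hypotheses on $\Phi$ make $\partial D_0$ a Jordan curve of class $C^1$ on which $\overline{\zeta}$ is Lipschitz, so the Plemelj--Sokhotski theorem supplies the continuous interior boundary limit
\[
T'(z_0^+)=-\tfrac{1}{2}\overline{z_0}\;-\;\mathrm{p.v.}\ \frac{1}{2\pi i}\int_{\partial D_0}\frac{\overline{\zeta}}{\zeta-z_0}\,d\zeta,\qquad z_0\in\partial D_0.
\]
Combining this with the chain rule $\tfrac{d}{dw}S(f)(w)=T'(\Phi(w)^+)\,\Phi'(w)$ along $\T$ and undoing the substitution $\zeta=\Phi(\tau)$ in the principal-value integral yields \eqref{dessa}.

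The main obstacle is justifying the boundary chain rule: even though $T'$ has only a one-sided limit at $\partial D_0$, we need $w\mapsto T(\Phi(w))$ to be differentiable along $\T$ with derivative $T'(\Phi(w)^+)\Phi'(w)$. This amounts to saying that $T$ extends as a $C^1$ function on $\overline{D_0}$ from the inside, which is a consequence of the Plemelj--Privalov regularity theorem for Cauchy integrals of H\"older functions on $C^1$ curves and should be invoked as a known fact. As a more hands-on alternative, one may approximate $S(f)$ by the truncated integrals $S_\epsilon(f)(w)$ over $\{\tau\in\T:|\tau-w|>\epsilon\}$ and differentiate by Leibniz. The two boundary contributions at $\tau_\pm=we^{\pm i\alpha(\epsilon)}$ each carry a $\log\alpha$ piece from the logarithmic singularity, but these cancel between the endpoints because the argument of $1-\Phi(w)/\Phi(\tau)$ jumps by $\pi$ across $\tau=w$; what remains is the finite contribution $-\overline{\Phi(w)}\Phi'(w)/2$, while the truncated integral converges to the principal value, recovering the same formula.
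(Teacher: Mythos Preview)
Your approach is essentially the one the paper takes: pull back to the Jordan curve $\Gamma=\Phi(\T)$, view $S(f)$ as the boundary value of an analytic function $\sigma$ (your $T$) on the enclosed domain, differentiate there to obtain minus the Cauchy integral of $\overline{\zeta}$, apply Plemelj's formula, and undo the change of variables with the chain rule.

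The only substantive difference is in how the ``obstacle'' you identify is handled. The paper does not appeal to Plemelj--Privalov; instead it rewrites the derivative via the generalized Cauchy formula
\[
-\frac{1}{2\pi i}\int_\Gamma \frac{\overline{\zeta}}{\zeta-z}\,d\zeta
=-\overline{z}-\frac{1}{\pi}\int_D\frac{dA(\zeta)}{\zeta-z},\qquad z\in D,
\]
and observes that the area integral is quasi-Lipschitz (the Cauchy potential of a bounded compactly supported function), hence $\sigma'$ extends continuously to $\overline{D}$. This gives $\sigma\in C^1(\overline{D})$ directly and makes the boundary chain rule immediate. This device is slightly preferable to yours: Plemelj--Privalov in its classical form is usually stated for Lyapunov (i.e., $C^{1+\alpha}$) curves, not merely $C^1$ curves, so invoking it here under the bare $C^1$ hypothesis requires some care. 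The area-integral argument sidesteps that issue entirely. Your alternative truncation sketch would also work in principle, but the endpoint cancellation you describe needs a careful write-up; the paper's route is shorter.
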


\begin{proof}
The proof of this lemma consists in computing the derivative with
respect to $w$ in the sense of distributions by integrating against
a test function on $\T.$ When integrating by parts we will get
$-\Phi'(w)$ times the principal value integral in \eqref{dessa} plus
a ``boundary term" $-\Phi'(w) \overline{\Phi(w)} / 2 $, which is due
to the fact that the logarithm in the definition of $S(f)$ is not
continuous on the diagonal.

An alternative argument goes as follows. Bring the integral defining
$S(f)$ on the $C^{1}$ Jordan curve  $\Gamma= \Phi(\T)$  to obtain
the function
$$
\sigma(z)= \frac {1}{2\pi i} \int_\Gamma \overline{\zeta} \, \log
(1-\frac{z}{\zeta})\,d\zeta, \quad z \in \Gamma.
$$
Think of $\sigma$ as an analytic function of $z \in D,$ the domain
enclosed by $\Gamma.$  Its derivative is
$$
-\frac {1}{2\pi i} \int_\Gamma
\frac{\overline{\zeta}}{\zeta-z}\,d\zeta = -\overline{z} -
\frac{1}{\pi} \int_D \frac{1}{\zeta-z}\,dA(\zeta), \quad z \in D,
$$
where the identity is the generalized Cauchy formula for the
function $\overline{z}$. The integral on $D$ in the right-hand side
is a quasi-Lipschitz function, as the Cauchy integral of a bounded
function, and hence the left-hand side extends continuously up to
the boundary. Call this extension $-C(\overline{\zeta})(z), \; z \in
\Gamma,$   where the notation refers to ``boundary values of the
Cauchy integral of the function $\overline{\zeta}$ " . By Plemelj's
formula (see, for example, \cite[p. 143]{V})
\begin{equation}\label{plemelj}
C(\overline{\zeta})(z)= \frac{\overline{z}}{2} + \text{p.v.\ }
\frac{1}{2 \pi i} \int_\Gamma \frac{\overline{\zeta}}{\zeta-z}\,
d\zeta.
\end{equation}
We conclude that $\sigma$ is differentiable on $\Gamma$ and its
derivative with respect to $z$ is given by minus the expression in
\eqref{plemelj}. Changing variables to return to the unit circle and
applying the chain rule we get \eqref{dessa}.
\end{proof}

Now everything is reduced to checking that the the principal value
integral in \eqref{dessa} satisfies a H\"{o}lder condition of order
$\alpha.$  This can be done rather easily in at least two ways. The
first consists in considering the operator
\begin{equation}\label{operator}
Tg(w) =\text{p.v.\ }  \frac{1}{2\pi i} \int_{\T}
\frac{g(\tau)}{\Phi(\tau)-\Phi(w)}\,d\tau,\quad w\in\,\T,
\end{equation}
and showing that $T$ maps boundedly $C^\alpha(\T)$ into itself. This
can be achieved by applying the $T(1)-$ Theorem for H\"{o}lder
spaces of Wittmann \cite[Theorem 2.1, p.584]{W} (see also
\cite{Ga}). Before stating Witmann's result recall that the maximal
singular integral of $g$ is
$$
T^*(g)(w)= \sup_{\epsilon > 0} \left| \int_{\T \setminus
D(w,\epsilon)}\frac{g(\tau)}{\Phi(\tau)-\Phi(w)}\,d\tau \right|
,\quad w\in\T,
$$
$D(w,\epsilon)$ being the disc centered at $w$ of radius $\epsilon.$
Wittmann's $T(1)-$ Theorem asserts in our context that the
$C^\alpha(\T)$ boundedness of $T$ follows by checking that the
kernel is ``standard", that $T^*(1)$ is bounded and that the
operator $T$ applied to the constant function $1$ lies in
$C^\alpha(\T).$ That the kernel is standard means in the situation
we are considering that
\begin{equation}\label{standard1}
\left|\frac{1}{\Phi(\tau)-\Phi(w)}\right| \le \frac{C}{|\tau-w|},
\quad \tau,w\in \mathbb{T}
\end{equation}
and
\begin{equation}\label{standard2}
\left|\frac{d}{dw} \left(\frac{1}{\Phi(\tau)-\Phi(w)}\right)\right|
\le \frac{C}{|\tau-w|^{2}}, \quad \tau,w\in \mathbb{T}
\end{equation}
which are clearly satisfied because $\Phi$ is bilipschitz on its
domain. To compute $T(1)(w), w \in \T,$ denote by
$\gamma_\epsilon,\; \epsilon>0,$ the arc which is the intersection
of the circle centered at $w$ of radius $\epsilon$ and the
complement of the open unit disc, with the counter-clockwise
orientation. Let $\T_\epsilon$ be the closed Jordan curve consisting
of the arc $\gamma_\epsilon$ followed by the part of the unit circle
at distance from $w$ not less than $\epsilon.$  We claim that
\begin{eqnarray}\label{tione}
\nonumber T(1)(w) &=&\lim_{\varepsilon\to 0} \left( \frac{1}{2\pi i}
\int_{\mathbb{T}_{\varepsilon}} \frac{\tau-w}{\Phi(\tau)-\Phi(w)}
\frac{d\tau}{\tau-w} -\frac{1}{2\pi i}\int_{\gamma_{\epsilon}}
\frac{\tau-w}{\Phi(\tau)-\Phi(w)}
\frac{d\tau}{\tau-w}\right)\\
 &=&1- \frac{1}{2\Phi'
(w)}
\end{eqnarray}
The integral over $\T_\epsilon$ is $1$ since the integrand is
analytic as a function of $\tau$ on the exterior of the unit disc
and we have
$$ \lim_{|\tau|\to\infty} \frac{\tau-w}{\Phi(\tau)-\Phi(w)}=1.
$$
The limit as $\epsilon \rightarrow 0$ of the integral over
$\gamma_\epsilon$ is
\begin{equation*}\label{gammaepsilon}
\frac{1}{\Phi'(w)} \lim_{\epsilon \rightarrow 0}
\int_{\gamma_\epsilon} \frac{d\tau}{\tau-w} = \frac{\pi
i}{\Phi'(w)},
\end{equation*}
and so \eqref{tione} is proven.
 From the assumption  $\Phi\in C^{1+\alpha}(\T)$ combined with \eqref{tione}  we obtain   that $T(1)
\in C^\alpha(\T).$  It is also clear that $T^*(1)$ is bounded on
$\T$ from the argument above.

For future reference we record now the following identity, whose
proof is similar to that just described of \eqref{tione},
\begin{equation}\label{identit123}
\text{p.v.}\frac{1}{2 \pi
i}\int_{\T}\frac{\Phi^\prime(\tau)}{\Phi(\tau)-\Phi(w)}d\tau=
\frac{1}{2}\cdot
\end{equation}

The second argument to show that the principal value integral in
\eqref{dessa} satisfies a H\"{o}lder condition of order $\alpha$
consists in changing variables $\zeta=\Phi(\tau)$ and $z=\Phi(w)$
and pass to a principal value integral on the $C^{1+\alpha}$ Jordan
curve $\Gamma=\Phi(\T).$ The kernel of the operator one obtains is
the Cauchy kernel and the integration is with respect to $d\zeta.$
This operator sends $C^{\alpha}(\Gamma)$ into itself on curves
satisfying a mild regularity assumption called Ahlfors regularity.
This can be proved by standard arguments and we prefer to omit the
lengthly calculations and inequalities involved.
%%%%%%%%%%%%%%%%%%%%%%%%%%%%%%%%%%%%%%%%%%%%%
\subsection{Real Fourier coefficients}
We intend to  show that if $f \in X$ then $F(\lambda,f)$ has real
Fourier coefficients. This is very simple once we notice that an integrable function $g$ on $\T$ has real Fourier coefficients if
and only if $g(\overline{w})= \overline{g(w)}.$  Examining the definition of $F(\lambda,f)$  in \eqref{efa}
we conclude that only the term $S(f)(w)$ has to be considered, the others having clearly real Fourier coefficients.  The identity $S(f)(\overline{w}) = \overline{S(f)(w)}$ follows from
the obvious change of variables $z=\overline{\tau}$.
\subsection{F is Gateaux differentiable}
We show here that $F$ is Gateaux differentiable and in the next
subsection we will check that the directional derivatives are
continuous. This will show that $F$ is continuously differentiable
on its domain $\R \times V.$ In proving differentiability properties
of $F(\lambda,f)$ we can consider only the first two terms in
\eqref{circle001} because the third, $m(I+f,\lambda),$ is a mean of
these  two terms. Denote by $G(\lambda,f)$ the sum of the first two
terms in \eqref{circle001}.

There is no problem with the partial derivative with respect to
$\lambda$ because the dependence on $\lambda$ is linear. We get
\begin{eqnarray*}\label{dlambda}
D_{\lambda} F(\lambda,f)(w)=|w+f(w)|^{2}
\end{eqnarray*}
which is obviously continuous with respect to $f$ in the topology of
$X.$\\
Take now $h\in X$ and compute the derivative of $G(\lambda,f)$ with
respect to $f$ in the direction $h,$  that is,
\begin{eqnarray}\label{partialefa}
\nonumber D_{f} G(\lambda,f)(h)&=& \frac{d}{dt} G(\lambda, f+th)_{\bigr\rvert_{t=0}}\\
&=&2\lambda \operatorname{Re} (\Phi \overline{h})
+2\operatorname{Re} \frac{d}{dt} S(f+th)_{\bigr\rvert_{t=0}}.
\end{eqnarray}
The computation of $\frac{d}{dt} S(f+th)|_{t=0}$  yields the four
terms below,
\begin{eqnarray}\label{partialessa}
\nonumber\frac{d}{dt} S(f+th)_{\bigr\rvert_{t=0}}
(w)&=&\frac{1}{2\pi i}\int_{\T} \overline{h(\tau)} \log
\left(1-\frac{\Phi(w)}{\Phi(\tau)} \right) \Phi'
(\tau)\,d\tau\\
\nonumber&+&+\frac{1}{2\pi i} \int_{\T} \overline{\Phi(\tau)} \log
\left(1-\frac{\Phi(w)}{\Phi(\tau)}
\right) h'(\tau)\,d\tau\\
\nonumber&+&\frac{1}{2\pi i} \int_{\T} \overline{\Phi(\tau)}
\frac{h(w)-h(\tau)}{\Phi(\tau)-\Phi(w)}
\Phi'(\tau)\,d\tau\\
\nonumber&+&\frac{1}{2\pi i}
\int_{\T}\frac{\overline{\Phi(\tau)}}{\Phi(\tau)}
h(\tau)\Phi'(\tau)\,d\tau\\
&:=& A(f,h) (w)+B(f,h)(w)+C(f,h)(w)+D(f,h),
\end{eqnarray}
where the last identity is the definition of the functions $A,B,C$
and $D.$ Notice that $D$ is independent of $w.$\\
We proceed now to prove that $A(f,h) \in C^{1+\alpha}(\T).$ For the
estimate of the absolute value of $A(f,h)(w)$ we can assume without
loss of generality that $w=1.$ Computing the derivative of
$\log(1-\Phi(1)/\Phi(\tau))$ with respect to $\tau$ in $|\tau|> 1$
and using the fundamental theorem of calculus we see that
\begin{equation}\label{log}
\log \left( 1-\frac{\Phi(1)}{\Phi(\tau)}\right)= -\int^{\infty}_{1}
\frac{\Phi(1)\Phi'(t\tau)}{(\Phi(t\tau)-\Phi(1))
\Phi(t\tau)}\tau\,dt,\quad 1\ne \tau,\quad |\tau|\geq1.
\end{equation}
The preceding identity is very useful in estimating the integrals
containing a logarithmic term, as we will see below. We proceed now
to estimate from below the factors in the denominator of the
fraction inside the integral in \eqref{log}. The function
$\Phi(\tau)/\tau$ is analytic on $\C \setminus \overline{\Delta}$
and takes the value $1$ at $\infty.$ Hence, by the maximum principle
and recalling that $f \in V,$ we obtain
\begin{eqnarray}\label{fisobre}
1-\|f\|_{L^\infty(\T)} \le \frac{|\Phi(\tau)|} {|\tau|} \le 1+
\|f\|_{L^\infty(\T)}, \quad |\tau|\ge 1,
\end{eqnarray}
and
\begin{equation}\label{coerciv1}
|\Phi(\tau)-\Phi(1)|\geq (1-\|f'\|_{L^\infty(\T)})|\tau-1|,\quad
|\tau|\geq 1.
\end{equation}
Therefore, by \eqref{log}, \eqref{fisobre} and \eqref{coerciv1} we
get for $\tau\in\T\backslash\{1\}$,
\begin{equation*}\label{log1}
\begin{split}
\left| \log \left(1- \frac{\Phi(1)}{\Phi(\tau)} \right) \right| &\le
|\Phi(1)| \|\Phi'\|_{L^\infty(\Delta^c)}\int^{\infty}_{1}
\frac{dt}{|\Phi (t\tau)-\Phi(1)| |\Phi(t\tau)|}\\*[7pt]
 &\le
\frac{\big(1+\|f\|_{L^\infty(\T)}\big)
\big(1+\|f^\prime\|_{L^\infty(\T)}\big)}{\big(1-\|f\|_{L^\infty(\T)}\big)
\big(1-\|f^\prime\|_{L^\infty(\T)}\big)}
\int_{1}^{\infty}\frac{dt}{t|t\tau -1|}\cdot
\end{split}
\end{equation*}
We split the interval of integration in the last integral above in
three subintervals : \\ $(1, 1+|\tau-1|), (1+|\tau-1|, 3)$ and
$(3,\infty).$ In the first we notice that for $t\geq 1$ and
$|\tau|=1$  we have the elementary inequality
$$
|t\tau-1|= |t-\overline{\tau}|=|t-\tau| \geq |1-\tau|
$$
and so
$$
\int_{1}^{1+|\tau-1|}\frac{dt}{t|t\tau -1|} \le 1.
$$
The integral on the second interval can be estimated
straightforwardly as follows
$$
\int_{1+|\tau-1|}^{3}\frac{dt}{t|t\tau - 1|} \le
\int_{1+|\tau-1|}^{3}\frac{dt}{t - 1} = \log \frac{2}{|\tau-1|}\cdot
$$
Finally
$$
\int_{3}^{\infty}\frac{dt}{t|t \tau -1|} \le
\int_{3}^{\infty}\frac{dt}{t(t-1)} = \log \frac{3}{2}.
$$
Therefore, collecting the preceding inequalities,
\begin{equation}\label{log2}
\left| \log \left(1- \frac{\Phi(1)}{\Phi(\tau)} \right) \right|  \le
C(f) \left(1+\big|\log {|\tau -1|}\big|\right), \quad \tau \in \T,
\end{equation}
where $C(f)$ is a constant depending only on the $C^1(\T)$ norm of
$f.$\\
Integration in $\tau$ on $\T$ readily yields
$$
\|A(f,h)\|_\infty \le C_1(f) \|h\|_\infty.
$$
The next step is to estimate the uniform norm and the Lipschitz
semi-norm of order $\alpha$ of the function $w\mapsto
\frac{d}{dw}A(f,h)(w).$ As in Lemma \ref{deefa} one has
\begin{equation}\label{deA}
\frac{d}{dw} A(f,h)(w) = -\Phi'(w) \left(\frac{\overline{h(w)}}{2}+
\text{p.v.\ } \frac{1}{2 \pi i} \int_{\T}
\frac{\overline{h(\tau)}\,\Phi'(\tau) }{\Phi(\tau)-\Phi(w)}\,d\tau
\right), \quad w\in\T,
\end{equation}
and the only difficulty lies in estimating the $C^\alpha(\T)$ norm
of the principal value integral. But this has already been done in
subsection 3.2. Therefore $A(f,h) \in C^{1+\alpha}(\T).$

The proof that the term $B(f,h)$ belongs to $C^{1+\alpha}(\T)$ is
basically the same. The expression for the derivative is
\begin{equation*}\label{dib}
\frac{d}{dw} B(f,h) (w)= -\Phi' (w) \left(
\frac{\overline{\Phi(w)}h'(w)}{2\Phi' (w)}+\text{p.v.\
}\frac{1}{2\pi i} \int_{\T} \frac{h'(\tau)
\overline{\Phi(\tau)}d\tau}{\Phi(\tau)-\Phi(w)}\right)
\end{equation*}
and one deals with the principal value integral as before, using the
operator in \eqref{operator}.

The proof that the term $C(f,h)$ is in $C^{1+\alpha}(\T)$ contains a
little variation of the preceding argument. First, we observe that
the quotient
$$
\frac{h(\tau)- h(w)}{\Phi(\tau)- \Phi(w)}
$$
takes continuously the value $h'(w)/\Phi'(w)$ on the diagonal of
$\T$. Consequently, in computing the derivative of $C(f,h)(w)$ no
boundary terms will arise and we get
\begin{eqnarray*}\label{dic}
\nonumber\frac{d}{dw} C(f,h) (w)&=& h' (w) \text{ p.v.\
}\frac{1}{2\pi i} \int_{|\tau|=1} \frac{g(\tau)}
{\Phi(\tau)-\Phi(w)}\,d\tau\\
 &+& \Phi' (w)
\text{ p.v.\ } \frac{1}{2\pi i} \int_{\T}
\frac{h(w)-h(\tau)}{(\Phi(\tau)-\Phi(w))^{2}}g(\tau)\,d\tau
\end{eqnarray*}
where $g(\tau)= \overline{\Phi(\tau)}\,\Phi'(\tau).$ The mapping
properties of the operator $T$ in \eqref{operator} take care of the
principal value integral in the first term. We view the principal
value integral in the second term as an operator $U$ acting on $g.$
Its kernel is standard, because $\Phi$ is bilipschitz, and the
action of $U$ on the constant function $1$ is, as in \eqref{tione},
\begin{eqnarray*}\label{uone}
\nonumber U(1)(w)&=& \lim_{\varepsilon\to 0}
\int_{\mathbb{T}_{\varepsilon}}
\frac{h(w)-h(\tau)}{(\Phi(\tau)-\Phi(w))^{2}}\,d\tau\\
\nonumber&+&\lim_{\varepsilon\to 0} \frac{1}{2\pi i}
\int_{\gamma_{\varepsilon}}
\frac{h(\tau)-h(w)}{\tau-w}\left(\frac{\tau-w}{\Phi(\tau)-\Phi(w)}\right)^{2}
\frac{d\tau}{\tau-w}\\
&=&\frac{h'(w)}{2\Phi'(w)^{2}}\cdot
\end{eqnarray*}
In the integral on $\T_\epsilon$ the integrand is analytic on $\C
\setminus \overline{\triangle}$ and has a double zero at $\infty,$
and so the integral vanishes. The limit as $\epsilon$ tends to $0$
of the integral on $\gamma_\epsilon$ is $i \pi$ times the limit as
$\tau$  tends to $w$ of the quotients inside, that is,
$h'(w)/\Phi'(w)^2.$ Then $U(1) \in C^\alpha(\T).$ The previous
discussion also gives that $U^*(1)$ is bounded.  Thus one can apply
Wittmann's $T(1)-$Theorem (see the paragraph after \eqref{operator})
and conclude that $U$ maps $C^\alpha(\T)$ into itself. This
completes the proof that $w\mapsto D_f F(\lambda,f)(h)(w) \in
C^{1+\alpha}(\T).$ On the other hand, it is clear that $D_f
F(\lambda,f)(h)$ depends linearly on $h.$ Therefore $F(\lambda,f)$
is Gateaux differentiable at any point $(\lambda,f) \in \R \times
V.$

\subsection{Continuity of $D_f F(\lambda,f)$}
In this subsection we prove that the mapping $D_f F(\lambda,f)$ is
continuous as a function of $f \in V$ taking values in the space of
bounded linear operators from $X$ into $Y.$  In particular this
shows that $F(\lambda,f)$ is continuously differentiable in the
Frechet sense. What one has to do is the following. Fix $f \in V$
and show an inequality of the type
\begin{equation*}\label{cont}
\|D_{f}F(\lambda,f)(h) - D_{g} F(\lambda,g)
(h)\|_{C^{1+\alpha}(\mathbb{T})}\le C_{1+\alpha}(f)
\|f-g\|_{C^{1+\alpha}(\mathbb{T})} \|h\|_{C^{1+\alpha}(\mathbb{T})}
\end{equation*}
for $g \in X$ close enough to $f.$ Here we denote by
$C_{1+\alpha}(f)$ a constant depending on the \mbox{norm
$\|f\|_{C^{1+\alpha}}$.} By \eqref{partialefa} and
\eqref{partialessa} this amounts to prove similar inequalities for
$A(f,h), B(f,h)$ and $C(f,h)$ in place of $D_f F(\lambda,f).$ For
instance,
\begin{equation*}\label{conta}
\|A(f,h) -A (g,h)\|_{C^{1+\alpha}(\mathbb{T})}\le C_{1+\alpha}(f)
\|f-g\|_{C^{1+\alpha}(\mathbb{T})} \|h\|_{C^{1+\alpha}(\mathbb{T})}
\end{equation*}
for $g \in X$ close to $f.$ We start with the estimate of the
uniform norm of the difference
\begin{equation}\label{contauniform}
\|A(f,h) -A (g,h)\|_{\infty}\le C_1(f) \|f-g\|_{C^1(\T)}
\|h\|_{L^\infty(\T)}.
\end{equation}
To prove the uniform estimate above it is enough to consider the
point $w=1$ in $\T.$  Take $g \in V$ close to $f$ and set
$\Psi(\tau)= \tau+g(\tau), \;|\tau| \geq 1.$ We can easily check
that
%The first move is to
%replace $\Phi'(\tau)$ in the expression
\begin{eqnarray}\label{A}
\nonumber A(f,h)(1)-A(g,h)(1)&=& \frac{1}{2 \pi i} \int_{\T}
\overline{h(\tau)} \log
\Big(1-\frac{\Phi(1)}{\Phi(\tau)}\Big)\big(\Phi'(\tau)-\Psi^\prime(\tau)\big)d\tau\\
\nonumber&+& \frac{1}{2 \pi i} \int_{\T}
\overline{h(\tau)}\Psi^\prime(\tau)\left( \log
\Big(1-\frac{\Phi(1)}{\Phi(\tau)}\Big)-\log
\Big(1-\frac{\Psi(1)}{\Psi(\tau)}\Big)\right) \,d\tau \\
&:=&\mathcal{I}_1(f,g)(h)+\mathcal{I}_2(f,g)(h).
\end{eqnarray}
To estimate the first term $\mathcal{I}_1(f,g)(h)$ we use
\eqref{log2},

\begin{eqnarray}\label{contauniform1}
\nonumber \big|\mathcal{I}_1(f,g)(h)\big| &\le&
\frac{1}{2\pi}\|h\|_{\infty}
\|f^\prime-g^\prime\|_{\infty}\int_{\T}\Big|\log
\Big(1-\frac{\Phi(1)}{\Phi(\tau)} \Big)  \Big||d\tau|\\
&\le& C(f) \|h\|_{\infty} \|f^\prime-g^\prime\|_{\infty}.
\end{eqnarray}
To treat  the second term $\mathcal{I}_2(f,g)(h)$, we  use the
identity \eqref{log} which yields
\begin{eqnarray*}
\mathcal{K}(\tau)&=&\int_{1}^{\infty}
\left(\frac{\Phi(1)\Phi'(t\tau)}{(\Phi(t\tau)-\Phi(1)) \Phi
(t\tau)}-\frac{\Psi(1)\Psi'(t\tau)}{(\Psi(t\tau)-\Psi(1)) \Psi
(t\tau)}\right)\tau\,dt  , \quad \tau\in \T\backslash\{1\},
\end{eqnarray*}
where
$$
\mathcal{K}(\tau):=\log \Big(1-\frac{\Psi(1)}{\Psi(\tau)}\Big)-\log
\Big(1-\frac{\Phi(1)}{\Phi(\tau)}\Big).
$$
By elementary algebraic computations one gets
\begin{eqnarray*}
\mathcal{K}(\tau)&=& \int_{1}^{\infty}
\frac{\big(\Phi(1)-\Psi(1)\big)\Phi'(t\tau)}{(\Phi(t\tau)-\Phi(1))
\Phi (t\tau)}\tau\,dt+\Psi(1)\int_{1}^{\infty}
\frac{\Phi'(t\tau)-\Psi^\prime(t\tau)}{(\Phi(t\tau)-\Phi(1)) \Phi
(t\tau)}\tau\,dt\\
&+&\Psi(1)\int_{1}^{\infty}\Psi^\prime(t\tau)
\frac{\Psi(t\tau)-\Phi(t\tau)}{(\Phi(t\tau)-\Phi(1)) \Phi
(t\tau)\Psi(t\tau)}\tau\,dt\\
&+&\Psi(1)\int_{1}^{\infty}\frac{\Psi^\prime(t\tau)}{\Psi(t\tau)}
\Big(\frac{1}{\Phi(t\tau)-\Phi(1)}-\frac{1}{\Psi(t\tau)-\Psi(1)}\Big) \tau\,dt\\
&:=&\sum_{j=1}^{4}\mathcal{K}_j(\tau).
\end{eqnarray*}
Coming back to the identity \eqref{log}, the  first term
$\mathcal{K}_1(\tau)$ takes the form
$$
\mathcal{K}_1(\tau)=-\frac{\Phi(1)-\Psi(1)}{\Phi(1)}\log
\Big(1-\frac{\Phi(1)}{\Phi(\tau)}\Big).
$$
It follows according to \eqref{log2} that
$$
\big|\mathcal{K}_1(\tau)\big|\le
C_1(f)\|f-g\|_{L^\infty(\T)}\Big(1+\big|\log|\tau-1|\big|\Big).
$$
As in the proof of \eqref{log2}, we obtain
\begin{eqnarray*}
\big|\mathcal{K}_2(\tau)\big|&\le& |\Psi(1)|\|\Phi^\prime-\Psi^\prime\|_{L^\infty(\Delta^c)}\int_{1}^{\infty}\frac{1}{|\Phi(t\tau)-1||\Phi(t\tau)|}d\tau\\
&\le& |\Psi(1)|\|f^\prime-g^\prime\|_{L^\infty(\T)}\int_{1}^{\infty}\frac{1}{|\Phi(t\tau)-1||\Phi(t\tau)|}d\tau\\
&\le&C_1(f)\|f^\prime-g^\prime\|_{L^\infty(\T)}\Big(1+\big|\log|\tau-1|\big|\Big).
\end{eqnarray*}
Concerning the third term, we write
\begin{eqnarray*}
\big|\mathcal{K}_3(\tau)\big|&\le& |\Psi(1)|\|\Psi^\prime\|_{L^\infty(\Delta^c)}\|\Phi-\Psi\|_{L^\infty(\Delta^c)}\int_{1}^{\infty}\frac{1}{|\Phi(t\tau)-1||\Phi(t\tau)||\Psi(t\tau)|}d\tau\\
&\le&C_1(f)\|f-g\|_{L^\infty(\T)}\Big(1+\big|\log|\tau-1|\big|\Big).
\end{eqnarray*}
To treat the last term $\mathcal{K}_4$ we use \eqref{coerciv1}
\begin{eqnarray*}
\Big|\frac{1}{\Phi(t\tau)-\Phi(1)}-\frac{1}{\Psi(t\tau)-\Psi(1)}\Big|&=&\Big|\frac{\{\Psi-\Phi\}(t\tau)-\{\Psi-\Phi\}(1)}{(\Phi(t\tau)-\Phi(1))(\Psi(t\tau)-\Psi(1))}\Big|\\
&\le& 4\frac{\|\Psi^\prime-\Phi^\prime\|_{L^\infty(\Delta^c)}|t\tau-1|}{|t\tau-1|^2}\\
&\le& 4\frac{\|f^\prime-g^\prime\|_{L^\infty(\T)}}{|t\tau-1|}.
\end{eqnarray*}
Consequently,
\begin{eqnarray*}
|\mathcal{K}_4(\tau)|\le
C_1(f)\|f^\prime-g^\prime\|_{L^\infty(\T)}\big(1+\big|\log|\tau-1|\big|\big).
\end{eqnarray*}
Therefore,
$$
|\mathcal{K}(\tau)|\le C_1(f)\|f-g\|_{C^1(\T)}
\big(1+\big|\log|\tau-1|\big|\big), \quad \tau\in\T\backslash\{1\}.
$$
Hence, coming back to the definition of $\mathcal{I}_2(f,g)(h)$ in
\eqref{A} and integrating  in $\tau$
$$
|\mathcal{I}_2(f,g)(h)|\le C_1(f)\|f-g\|_{C^1(\T)}.
$$
Finally,
\begin{equation*}\label{contauniform4}
|A(f,h) (1)-A (g,h)(1)|\le C_1(f)
\|h\|_{L^\infty(\T)}\|f-g\|_{C^1(\T)}
\end{equation*}
which is \eqref{contauniform}.

Our next task is to estimate the difference
\begin{equation*}\label{difA}
\frac{d}{dw} A(f,h)(w)-\frac{d}{dw} A(g,h)(w)
\end{equation*}
in $C^\alpha(\T)$ for $g$ close to $f$ in $C^{1+\alpha}(\T).$ The
difficult term in the formula for the derivative of the function  $
A(f,h)(w)$ in \eqref{deA} is the principal value integral
\begin{eqnarray}\label{contauniformde}
\nonumber I(f,h)(w) &=& \text{p.v.\ } \frac{1}{2\pi i}\int_{\T}
\frac{\overline{h(\tau)}\Phi'(\tau)} {\Phi
(\tau)-\Phi(w)}\,d\tau\\
 \nonumber&=&\frac{1}{2\pi i} \int_{\T} \frac{\overline{h(\tau)}-\overline{h(w)}} {\Phi (\tau)-\Phi(w)}\Phi'(\tau)\,d\tau +\frac{1}{2} \overline{h(w)}\\
 &:= &J(f,h) (w) +\frac{1}{2} \overline{h(w)}.
\end{eqnarray}
Thus the difference $I(f,h)(w) - I(g,h)(w)$ is
\begin{eqnarray*}
J(f,h)(w)-J(g,h)(w)&=& \frac{1}{2\pi i}\int_{\T}
\frac{\overline{h(\tau)}-\overline{h(w)}} {\Phi
(\tau)-\Phi(w)} (\Phi'(\tau)-\Psi'(\tau))\,d\tau\\
&+&\frac{1}{2\pi i}\int_{\T}
\big({\overline{h(\tau)}-\overline{h(w)}}\big)
\Psi^\prime(\tau)\Big(\frac{1}{{\Phi
(\tau)-\Phi(w)}}-\frac{1}{ \Psi(\tau)-\Psi(w)}\Big)\,d\tau\\
&:=&\mathcal{T}_1(f,g)(w)+\mathcal{T}_2(f,g)(w).
\end{eqnarray*}
Now we need an estimate for $\mathcal{T}_1(f,g)$ and
$\mathcal{T}_2(f,g)$ in $C^\alpha(\T).$ Both terms have the form
\begin{equation}\label{contauniformde4}
T\chi(w)= \int_{\T} K(w,\tau) \chi(\tau)\,d\tau,\qquad \chi\in
L^{\infty}(\mathbb{T})
\end{equation}
where the kernel $K(w,\tau)$ and the function $\chi$ are
\begin{equation*}\label{contauniformde5}
K(w,\tau)= \frac{\overline{h(\tau)}- \overline{h(w)}}
{\Phi(\tau)-\Phi(w)}, \qquad \chi(\tau)= \Phi' (\tau)-\Psi'(\tau)
\end{equation*}
in $\mathcal{T}_1(f,g)(w)$ and
\begin{equation*}\label{contauniformde6}
K(w,\tau)=\frac{\overline{{h}(\tau)}-\overline{h(w)}}{\Phi(\tau)-\Phi(w)}-\frac{\overline{{h}(\tau)}-\overline{h(w)}}{\Psi(\tau)-\Psi(w)},
\quad \chi(\tau)=\Psi^\prime(\tau)
\end{equation*}
in $\mathcal{T}_2(f,g)(w).$

\begin{lemma}\label{kernel}
Assume that the kernel of the operator $T$ in
\eqref{contauniformde4} satisfies
\begin{enumerate}
\item
$K$ is measurable on $\T \times \T$ and
$$|K(w,\tau)| \le C_0, \quad w, \tau \in \T.$$

\item
For each $\tau \in \T$, $w\mapsto K(w,\tau)$ is differentiable in
$\T\backslash\{w\}$ and
$$
\left |\partial_w K(w,\tau) \right| \le \frac{C_0}{|w-\tau|}, \quad
w,\tau \in \T, \; w \neq \tau.
$$
\end{enumerate}
Then
\begin{equation}\label{liplog}
|T\chi(w_1)-T\chi(w_2)| \le C\|\chi\|_{L^\infty}\,C_0 \left(1+\log
\frac{1}{|w_1 -w_2|} \right) |w_1 -w_2|
 , \quad w_1\neq w_2 \in \T,
\end{equation}
for some constant $C.$
 In particular, $T\chi  \in C^\alpha(\T)$ for every $0 < \alpha < 1.$
\end{lemma}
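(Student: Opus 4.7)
The plan is to write the difference as a single integral
$T\chi(w_1) - T\chi(w_2) = \int_{\T}[K(w_1,\tau)-K(w_2,\tau)]\chi(\tau)\,d\tau$
and split the domain of integration around $w_1$. Set $\delta := |w_1-w_2|$, which we may assume to be smaller than any fixed positive constant (otherwise \eqref{liplog} follows trivially from hypothesis~(1) and the finite length of $\T$). Fix a sufficiently large absolute constant $M$ and decompose $\T = A \cup B$ with $A := \{\tau \in \T : |\tau-w_1| < M\delta\}$ and $B := \T \setminus A$.

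On $A$, hypothesis~(1) gives the pointwise bound $|K(w_1,\tau)-K(w_2,\tau)| \le 2C_0$, while the arc length of $A$ is $\lesssim \delta$, so the contribution of $A$ to the difference is $\lesssim C_0\,\|\chi\|_{L^\infty}\,\delta$. On $B$, I parametrize the shorter arc $\gamma \subset \T$ joining $w_1$ to $w_2$; it has length $\lesssim \delta$, and the key geometric observation is that for every $\tau \in B$ and every $w \in \gamma$ the triangle inequality yields $|w-\tau| \ge |w_1-\tau| - |w-w_1| \ge \tfrac12 |w_1-\tau|$ once $M$ has been chosen large enough. The fundamental theorem of calculus along $\gamma$, together with hypothesis~(2), then gives $|K(w_1,\tau)-K(w_2,\tau)| \le \int_\gamma |\partial_w K(w,\tau)|\,|dw| \lesssim C_0\,\delta/|w_1-\tau|$. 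Multiplying by $\|\chi\|_{L^\infty}$ and integrating over $B$, the polar integral $\int_{M\delta}^{\pi} dr/r \lesssim \log(1/\delta)$ produces a contribution $\lesssim C_0\,\|\chi\|_{L^\infty}\,\delta\log(1/\delta)$. Summing the two bounds proves \eqref{liplog}.

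The $C^\alpha(\T)$ conclusion is then immediate. The modulus $\omega(\delta):=\delta(1+\log(1/\delta))$ satisfies $\omega(\delta)/\delta^\alpha \to 0$ as $\delta \to 0^+$ for every $0<\alpha<1$, so $\omega(\delta) \le C_\alpha\,\delta^\alpha$ on any bounded range of $\delta$; this controls the $\alpha$-H\"older semi-norm of $T\chi$, while the uniform norm is bounded by $2\pi C_0\,\|\chi\|_{L^\infty}$ directly from hypothesis~(1). The only genuine obstacle is the elementary but slightly delicate arc-versus-chord bookkeeping justifying the comparison $|w-\tau|\gtrsim|w_1-\tau|$ along $\gamma$; once $M$ has been fixed, what remains is the standard $\delta\log(1/\delta)$-modulus estimate for operators whose kernel carries a Calder\'on--Zygmund-type derivative bound.
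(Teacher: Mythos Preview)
Your argument is correct and is precisely the standard near/far decomposition that the paper has in mind; the paper itself does not spell out a proof but calls the lemma ``simple and standard'' and refers to \cite[p.~419]{MOV}, where exactly this splitting---bounded kernel on the near set, mean-value/gradient estimate on the far set yielding the $\delta\log(1/\delta)$ modulus---is carried out. Your handling of the arc-versus-chord comparison along the short arc $\gamma$ is the right way to make hypothesis~(2) usable, so there is nothing to add.
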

The proof of this lemma
 is simple and standard. For the details of a similar
result the reader is referred to \cite[p.419]{MOV}.

To get the desired estimates for  $\mathcal{T}_1(f,g)$ and
$\mathcal{T}_2(f,g)$ we just need to  check that their kernels
satisfy the hypothesis of Lemma \ref{kernel}. We deal first with
$\mathcal{T}_1(f,g).$  From \eqref{coerciv1} with the point $1 \in
\T$ replaced by an arbitrary $w \in \T$, one readily gets
$$
|K(w,\tau)|\le C\|h^\prime\|_{L^\infty(\T)},\quad \left |\partial_w
K(w,\tau) \right| \le C\|h^\prime\|_{L^\infty(\T)}|\tau-w|^{-1}.
$$
Therefore
\begin{eqnarray*}
\|\mathcal{T}_1(f,g)\|_{C^\alpha(\T)}&\le& C \,\|h^\prime\|_{L^\infty(\T)} \|\Psi^\prime-\Phi^\prime\|_{L^\infty(\T)}\\
&\le&C \,\|h^\prime\|_{L^\infty(\T)} \|f-g\|_{C^1(\T)}.
\end{eqnarray*}
To estimate the kernel of $\mathcal{T}_2(f,g)$ we use
\eqref{coerciv1}
\begin{eqnarray*}
\big|K(w,\tau)  \big|&=&\frac{|h(\tau)-h(w)|\big|(\Phi-\Psi)(w)- (\Phi-\Psi)(\tau) \big|}{|\Phi(\tau)-\Phi(w)|\,|\Psi(\tau)-\Psi(w)|}\\
&\le&C\|h^\prime\|_{L^\infty(\T)}\|\Phi^\prime-\Psi^\prime\|_{L^\infty(\T)}.
\end{eqnarray*}
The derivative of $K(w,\tau)$ with respect to $w$ can also be
estimated easily :
\begin{eqnarray*}
\big|\partial_wK(w,\tau)\big| &\le&|h^\prime(w)|\Big|\frac{1}{\Phi(\tau)-\Phi(w)}-\frac{1}{\Psi(\tau)-\Psi(w)}\Big|\\
&+&{|h(\tau)-h(w)|}\Bigg|\frac{\Phi^\prime(w)}{\big(\Phi(\tau)-\Phi(w)\big)^2}-\frac{\Psi^\prime(w)}{\big(\Psi(\tau)-\Psi(w)\big)^2}\Bigg|\\
&\le& C\|h^\prime\|_{L^\infty(\T)}\|\Phi^\prime-\Psi^\prime\|_{L^\infty(\T)}\Big(1+\|\Phi^\prime+\Psi^\prime\|_{L^\infty(\T)} \Big)\frac{1}{|\tau-w|}\\
&\le& C_1(f)\|h\|_{C^1(\T)}\|f-g\|_{C^1(\T)}\frac{1}{|\tau-w|}\cdot
\end{eqnarray*}
This gives according to Lemma \ref{kernel}
\begin{eqnarray*}
\|\mathcal{T}_2(f,g)\|_{C^\alpha(\T)}&\le&
C_1(f)\|h\|_{C^1(\T)}\|f-g\|_{C^1(\T)}.
\end{eqnarray*}
Hence we get
$$
\big\|I(f,h)-I(g,h) \big\|_{C^{1+\alpha}(\T)}\le
C_1(f)\|h\|_{C^1(\T)}\|f-g\|_{C^1(\T)}
$$
and, finally, gathering all previous estimates
\begin{eqnarray*}
\big\|A(f,h)-A(g,h) \big\|_{C^{1+\alpha}(\T)}%&\le&C_1(f)\|f-g\|_{C^{1}(\T)}\|h\|_{L^\infty(\T)}+ C\|f-g\|_{C^{1+\alpha}(\T)}\|h\|_{C^\alpha(\T)}\\
%&+&C\|\Psi\|_{C^1(\T)}\|h\|_{C^1(\T)}\|f-g\|_{C^1(\T)}\\
&\le& C_1(f)\|f-g\|_{C^{1+\alpha}(\T)}\|h\|_{C^{1+\alpha}(\T)}.
\end{eqnarray*}
%Thus using
%Notice that $\|f'-g'\|_\infty$ is a factor of the constant $C_0$ of
%the kernel of $\tilde{D}(f,g).$  Instead, for the term $D(f,g)$ the
%function $\chi$ satisfies $\|\chi\|_\infty = \|f'-g'\|_\infty. $
This concludes the proof of the continuity of the term $A(f,h)(w)$
with respect to $f.$

The proof for the terms $B(f,h)$, $C(f,h)$ and $D(f,h)$ given by
\eqref{partialessa} follows a similar pattern. We omit the details.

\subsection{Kernel and range of $D_f F(\lambda,0)$}

In this subsection we study the kernel and the range of $D_f
F(\lambda,0).$  We also find the ``eigenvalues", that is, the values
of $\lambda$ for which the kernel of $D_f F(\lambda,0)$ is not
trivial. In fact,  the dimension of the kernel for these particular
values of $\lambda$ turns out to be $1$. Then we check that the
range has codimension $1$, so that Crandall-Rabinowitz's Theorem can
be applied.

Letting $f=0$ in \eqref{partialefa} and \eqref{partialessa} we
obtain
\begin{equation*}\label{deefazero}
D_f
F(\lambda,0)(h)(w)=2\lambda\,\operatorname{Re}(h(w)\overline{w})+2
\operatorname{Re} \Big(\frac{1}{2\pi
i}\int_{\mathbb{T}}\overline{h(\tau)}\log(1-\frac{w}{\tau})d\tau\Big)-m(h),
\end{equation*}
where $m(h)$ is the mean on $\T$ with respect to $|dw|$ of the sum
of the first two terms in the right-hand side above. Hence $w\mapsto
D_f F(\lambda,0)(h)(w)$ has zero integral with respect to $|dw|$ on
$\T.$ We would like to compute the Fourier series of $w\mapsto D_f
F(\lambda,0)(h)(w)$ in terms of the Fourier series of $h$
\begin{equation}\label{fourierh}
h(w)=\sum_{n=0}^{\infty} b_n\,\overline{w}^n,\quad w\in\T.
\end{equation}
Since $h \in X$ the Fourier coefficients $b_n$ of $h$ are real.
Using the expansion
$$\log(1- \frac{w}{\tau})= - \sum_{n=1}^\infty \frac{1}{n} w^n \overline{\tau}^n$$
and computing we get
\begin{equation*}\label{fourierDF}
D_f
F(\lambda,0)(h)(w)=\sum_{n=1}^{\infty}(\lambda-\frac1n)\,b_{n-1}\,w^n+\sum_{n=1}^{\infty}(\lambda-\frac1n)\,b_{n-1}\,\overline{w}^n.
\end{equation*}
From the above expression we immediately conclude that the kernel of
$D_f F(\lambda,0)$ is non trivial only if $\lambda= 1/m$ for some
positive integer $m.$ If this is the case, then the kernel is one
dimensional and is generated by the function $w\mapsto
\overline{w}^{m-1}= 1/ w^{m-1}.$  It is precisely at this point when
we use the fact that the Fourier coefficients of the functions in
our space $X$ are real. If the coefficient were complex, we would
get a kernel of real dimension $3.$ Let us now look at the range of
$D_f F(\lambda,0)$ under the assumption that $\lambda = 1/ m.$
Clearly
\begin{equation}\label{fourierDF1}
D_f F(1/m,0)(h)(w)=
2\sum_{n=1}^{\infty}\big(\frac1m-\frac1n\big)\,b_{n-1}\,
\cos(n\theta), \quad w=e^{i \theta}.
\end{equation}
Notice that $D_f F(1/m,0)(h)$ is a function with zero integral and
real Fourier coefficients. We have shown in subsection 3.4 that it
is in $C^{1+\alpha}(\T)$ and thus in the space  $Y$ given by
\eqref{Yspace}.  The only Fourier frequency missing in the expansion
\eqref{fourierDF1} is $m$, so that it looks plausible that a
complement of the range of $D_f F(1/m,0)$ is the one dimensional
subspace generated by $\cos (m \theta).$ To prove this we need to
show that each $g \in Y$ with an expansion of the form
\begin{equation*}\label{expansiong}
g(w)=\sum_ {n=1,\, n\neq m}^{\infty} \beta_n\,\overline{w}^n+\sum_{
n=1,\, n\neq m}^{\infty} \beta_n\,{w}^n.
\end{equation*}
with real $\beta_n$ is equal to  $D_f F(1/m,0)(h)$ for some $h \in
X.$ If $h$ is as in \eqref{fourierh} with real Fourier coefficients
then the equation $D_f F(1/m,0)(h) = g$ is equivalent to
\begin{equation*}\label{equationh}
2\big(\frac1m-\frac1n\big)b_{n-1}=\beta_n, \quad  n=1,2,..
\end{equation*}
or, solving for $b_n,$
%\nonumber b_n &=& \frac{m}{2}\frac{n+1}{n+1-m}\beta_{n+1}\\
%&=&\frac{m}{2}\beta_{n+1}+\frac{m^2}{2}\frac{1}{n+1-m}\beta_{n+1}.
\begin{eqnarray*}%\label{equationbn}
 \nonumber b_n & =& \frac{m}{2}\,\frac{n+1}{n+1-m}\,\beta_{n+1}\\
&=&\frac{m}{2}\,\beta_{n+1}+\frac{m^2}{2}\,\frac{1}{n+1-m}\,\beta_{n+1}.
\end{eqnarray*}
The solution to $D_f F(1/m,0)(h)=g$ is
\begin{equation*}\label{solutionh}
h(w)=\frac{m}{2}\,w \,G(w)+\frac{m^2}{2}\,w\, H(w)
\end{equation*}
where
\begin{equation*}\label{G}
G(w)=\sum_{n=1}^{\infty}\beta_{n}\,\overline{w}^n,\quad w\in\T
\end{equation*}
and
\begin{equation*}\label{H}
H(w)=\sum_{ n=1, \;n\ne m}^{\infty}\frac{\beta_n}{n-m}\,
\overline{w}^n,\quad w\in\T.
\end{equation*}
The function $G$ is in $C^{1+\alpha}(\T)$, and then in $X,$ because
the Cauchy projection
\begin{equation*}\label{Cauchyproj}
\sum_{n=-\infty}^{\infty}c_n w^n\mapsto \sum_{n=0}^{\infty}c_n w^n
\end{equation*}
preserves the space $C^{1+\alpha}(\T)$. This is false for the space
$C^{1}(\T)$ of continuously differentiable functions on $\T$
(because the Cauchy projection does not preserve $L^\infty(\T)$) and
this is why we cannot choose the space $C^{1}(\T)$ in the definition
of $X$ and $Y$. It still remains to show that $H \in
C^{1+\alpha}(\T),$ but this is easy. Set
$$K(w)= \sum_{ n=1,\; n\neq m}^\infty \frac{\overline{w}^n}{n-m}, \quad w\in\T,$$
so that $K \in L^2(\T) \subset L^1(\T)$ and $H=G * K \in
C^{1+\alpha}(\T).$

To apply Crandall-Rabinowitz's Theorem we still have to check the
transversality condition, that is, the the second order partial
derivative $D_{f \lambda} F(1/m,0)$ of $F$ with respect to $f$ and
$\lambda $ applied to the function $w\mapsto \overline{w}^{m-1}$ is
not in the range of $D_f F(1/m,0).$ Now $D_{f \lambda} F(1/m,0)$ can
be identified with a bounded linear mapping from $X$ into $Y.$ It is
easy to see that
$$
D_{f \lambda} F(1/m,0)(h)(w) = 2 \text{Re} (h(w)\overline{w}), \quad
h \in X.
$$
Hence
$$
D_{f \lambda} F(1/m,0)(\overline{w}^{m-1})(w) = 2 \text{Re}
(\overline{w}^{m}) = 2 \cos(m \theta), \quad w=e^{i\theta},
$$
which is not in the range of $D_f F(1/m,0).$

Finally one checks easily that $D_{f \lambda} F(\lambda,f)(h)(w) = 2
\operatorname{Re}(\Phi(w)\overline{h(w)})$ is a continuous function
on $\R \times V.$
\subsection{$m$-fold symmetry}
We showed in the previous subsections how to apply
Crandall-Rabinowitz's Theorem to the spaces $X$ and $Y.$ The
conclusion is that, given a positive integer $m$, we have a
continuous curve $(\lambda_\xi, f_\xi) \in \R \times V$, defined for
$\xi$ in some interval of the form $(1/m -\delta, 1/m +\delta),$
such that $F(\lambda_\xi, f_\xi) = 0, \; \xi \in (1/m -\delta, 1/m
+\delta).$ Then $\Phi_\xi(z)=z+f_\xi(z), \; |z|\geq 1,$ is a
conformal mapping of $\C_\infty \setminus {\overline{\triangle}}$
into some domain $U_\xi$ and $D_\xi = \C_\infty
\setminus{\overline{U_\xi}}$ is a simply connected vortex patch
which rotates with angular velocity $\Omega_\xi =
(1-\lambda_\xi)/2.$ We know that $D_\xi$ is a domain with boundary
of class $C^{1+\alpha}$, but nothing else can be said about its
symmetry properties without further arguments . The $m$-fold
symmetry follows by adding a condition to the spaces $X$ and $Y.$\\
Given $m$, define $X_m$ as the subspace of $X$ consisting of those
functions $f \in X$ with a Fourier expansion of the type
\begin{equation*}\label{expansionm}
f(w)= \sum_{n=1}^{\infty}a_{nm-1} \overline{w}^{nm-1},\quad w\in \T.
\end{equation*}
If $f$ is in the open unit ball of $X_m$ the expansion of the
associated conformal mapping $\Phi$ \mbox{in $\{z : |z| \ge 1 \}$}
is given by
\begin{equation*}\label{expansionmfi}
\Phi(z)= z \big(1+\sum_{n=1}^{\infty} \frac{a_{nm-1}}{z^{nm}}\big).
\end{equation*}
This will provide the $m$-fold symmetry of the associated patch, via
the relation
$$\Phi(e^{i 2 \pi/m} z)= e^{i 2 \pi/m} \Phi(z), \; |z|\geq1
.$$ The space $Y_m$ is the subspace of $Y$ consisting of those $g
\in Y$ whose Fourier coefficients vanish at frequencies which are
not non-zero multiples of $m.$ In other words, the Fourier expansion
of $g$ is of the type
\begin{equation*}\label{expansiony}
g(w)= \sum_{n=0}^\infty \beta_{nm}\, 2 \cos(nm\theta), \quad
w=e^{i\theta},
\end{equation*}
with $\beta_0 = 0$ and real $\beta_{nm}.$ Notice that, since the
generator $w\mapsto \overline{w}^{m-1}$ of the kernel of $D_f
F(1/m,0)$ is in $X_m$ for $m \ge 2$, we still have that the
dimension of the kernel is $1.$ In the same way the codimension of
the range of  $D_f F(1/m,0)$ in $Y_m$ is $1.$

However to apply Crandall-Rabinowitz's Theorem to $X_m$ and $Y_m$
one has to check that $F(\lambda,f) \in Y_m$ if $f \in X_m.$ This
follows rather easily from work we have already done. One has to
observe that the space $A_m$ of continuous functions on $\T$ whose
Fourier coefficients vanish at frequencies which are not integer
multiples of $m$ is an algebra, closed in the space of continuous
functions on $\T$.  Next we remark that $w\mapsto \Phi'(w) \in A_m.$
We also need the fact that $w\mapsto \frac{w}{\Phi(w)} \in A_m.$ To
show this, set
$$g(w)= \frac{\Phi(w)}{w} - 1, \quad w \in \T,$$
so that $g \in A_m$ and $\|g\|_\infty < 1.$ Thus
$$
w\mapsto \frac{w}{\Phi(w)} = \sum_{n=0}^{\infty}(-1)^{n}g^{n}(w)\in
A_m.
$$

An easy computation gives that $w\mapsto \lambda |\Phi(w)|^2$
belongs to $A_m$ if $f \in X_m.$ It remains to show that $w\mapsto
S(f)(w)$ is in $A_m.$ Recall the identities \eqref{essa2} and
\eqref{Asubnk}. First, $a_n \neq 0$ only for indexes of the form
$n=m q-1$ for some positive integer $q.$ On the other hand, $A_{nk}$
is the Fourier coefficient corresponding to the frequency $n-k+1$ of
the function $\Phi'(\tau) (\tau / \Phi(\tau))^k,$ which is in $A_m.$
Hence $A_{nk}$ is non-zero only if $n-k+1 = m r$ for some integer
$r.$ Therefore the sum in $k$ is only over indexes which are
multiples of $m.$ It remains to examine at the Fourier coefficients
of $\Phi(w)^k.$ Now $\Phi(w)= w g(w)$ with $g \in A_m$ and so
$\Phi(w)^k = w^k g(w)^k$ is also in $A_m$ because only indexes $k$
which are multiples of $m$ have to be taken into account.

Therefore we can apply Crandall-Rabinowitz's Theorem to $X_m$ and
$Y_m$ and finally obtain the existence of $m$-fold symmetric
V-states for each integer $m \ge 2.$

\subsection{Kirchhoff's ellipses}
For $m=2$ we obtain the ellipses parametrized by $w\in \T\mapsto
w+\xi \overline{w}.$  The real number $\xi$ satisfies $-1 < \xi < 1$
and is a parameter which determines the shape of the ellipse. The
ellipse is centered at $0$, has horizontal semi-axis $1+\xi, \; $
and vertical semi-axis $1-\xi.$ The function $z\mapsto
z+\frac{\xi}{z} $ is the conformal mapping
 of the exterior on the unit disc onto the exterior of
the ellipse.  It is instructive to use Crandall-Rabinowitz's Theorem
to prove that these ellipses rotate. We are going to apply the
Theorem to the one dimensional spaces $X,$ which is generated by
$\overline{w},$ and $Y,$ generated by $w^2 +\overline{w}^2 = 2
\cos(2\theta), \; w=e^{i \theta}.$ Notice that $X$ is the kernel of
$D_f F(1/2,0),$ which then has range $\{0\}$ of codimension $1$ in
$Y.$ Of course we have to check that $F(\lambda,f)$ sends $X$ into
$Y.$ Take $f(w)= \xi \overline{w}$ with $|\xi| < 1,$ so that
$\Phi(w) = w + \xi \overline{w}.$  The term
$$
|\Phi(w)|^2 = 1+ \xi^2 + \xi w^2 + \xi \overline{w}^2
$$
is correct because the constant $1+\xi^2$ will disappear when
subtracting the mean. We can compute explicitly $S(f)(w)$ using
\eqref{essa2} and the remark that the sum in $n$ and $k$ may be
reversed because the sum in $n$ is finite. We obtain
$$
S(f)(w)= - \sum_{k=1}^\infty \frac{\Phi(w)^k}{k}\, \frac{1}{2 \pi i}
\int_{|\tau|=1} \overline{\Phi(\tau)} \Phi'(\tau)
\frac{1}{\Phi(\tau)^k}\,d\tau.
$$
The only term that survives is that corresponding to the index $k=2$
and the result of the integral is $\xi.$ Thus
$$
S(f)(w)= - \frac{\xi}{2} (w+ \xi \overline{w})^2
$$
and
$$
2 \operatorname{Re}S(f)(w) = -\left(\frac{\xi}{2}(1+\xi^2)(w^2 +
\overline{w}^2)) + 2 \xi^2 \right).
$$
Again the constant term will disappear when subtracting the mean and
we conclude \mbox{that  $F(\lambda,f) \in Y.$}

A final remark is that, strictly speaking, the conclusion of
Crandall-Rabinowitz's Theorem is that for some little interval of
$\xi$ centered at $0$ the associated ellipse rotates. But, of
course, that any ellipse satisfies Burbea's equation
\eqref{rvortex5} can be proved directly.  It is interesting to
notice that, in this example, $\Phi_\xi(z)$ is analytic on a
neighborhood of $\{z: |z|\ge 1 \}$ for each $\xi \in (-1,1)$, and
real analytic in $\xi$ for each $z \in \T.$ We do not know how
general this fact is.

\section{Boundary smoothness of rotating vortex patches}

In this section we prove our main result, namely that if the
bifurcated patch is close enough to the circle where bifurcation
takes place, then the boundary of the patch is of class $C^\infty.$
Before stating the result more formally we remind the reader of the
big picture. We called $V$ the set of functions in the unit ball of
$C^{1+\alpha}(\T)$ with real Fourier coefficients living only at
negative frequencies. Each $f \in V$ determines a conformal mapping
$\Phi(z)=z+f(z)$ of the complement of the closed unit disc
$\overline{\Delta}$ into some domain containing the point at
$\infty.$ The boundary of the simply connected domain $D=\C
\setminus \Phi(\C\setminus \Delta)$ is the Jordan curve $\Phi(\T)$
and so, since $\Phi \in C^{1+\alpha}(\T),$ the boundary of $D$ is a
Jordan curve of class $C^{1+\alpha}.$ Burbea's existence Theorem
asserts that for each integer $m \geq 2$ there exists a small
positive number $a$ and a continuous curve $f(\xi),\, -a < \xi <a,$
taking values in $V$ such that the simply connected domain $D_\xi$
associated with $f(\xi)$ is an $m$-fold rotating vortex patch. Since
$f(0)=0,$ $D_0$ is the open unit disc and one should think that
$D_\xi$ is a domain close to the disc, for small values of $\xi \in
(-a,a)$, in the topology determined by $C^{1+\alpha}(\T)$ . We claim
that if $D_\xi$ is close enough to the disc in the topology given by
$C^{1}(\T)$ then the boundary of $D_\xi$ is of class $C^\infty.$
Later on we will show that if $D_\xi$ is close enough to the disc in
the topology given by $C^{2}(\T)$ then $D_\xi$ is also convex.

\begin{teor}\label{teor12}
For each integer $m \ge 3,$ there exists a small positive
$\epsilon_0 = \epsilon_0(m)$ such that if $f \in V$ defines an
$m$-fold V-state $D$ and $\|f\|_{C^1(\T)}< \epsilon_0$, then $D$ has
boundary of class $C^\infty.$
\end{teor}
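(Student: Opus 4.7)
The plan is to bootstrap the $C^{1+\alpha}$ regularity of $f$ to $C^{\infty}$ by differentiating Burbea's equation once, solving for the argument of $\Phi'$ on $\T$, and recovering $\log\Phi'$ by harmonic conjugation.

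\textbf{Step 1 (differentiated equation).} From $F(\lambda,f)=0$, differentiate with respect to $\theta$ (where $w=e^{i\theta}$). Using Lemma~\ref{deefa} together with Plemelj's formula \eqref{plemelj} and the generalized Cauchy formula \eqref{gcauchy}, the principal-value Cauchy integral appearing in $dS(f)/dw$ is replaced by the smoother area integral $u(z):=\frac{1}{\pi}\int_{D}\frac{dA(\zeta)}{z-\zeta}$, yielding the identity
\begin{equation*}
\operatorname{Im}\bigl( w\,\Phi'(w)\,\psi(w) \bigr)=0,\quad w\in\T,\quad\text{where}\quad \psi(w):=(\lambda-1)\overline{\Phi(w)}+u(\Phi(w)).
\end{equation*}
At the bifurcation point ($f=0$, $\lambda=1/m$), one computes $u(z)=\overline{z}$ on $|z|=1$, so $\psi(w)=\overline{w}/m$ has modulus $1/m$ bounded away from zero. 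For $\|f\|_{C^1(\T)}<\epsilon_0$ small and $\lambda_\xi$ close to $1/m$, continuity of the Cauchy transform in $f$ gives $|\psi(w)|\geq c_0>0$ uniformly on $\T$, and one may solve the identity above for the argument:
\begin{equation*}
\arg\Phi'(w)\equiv -\theta-\arg\psi(w)\pmod\pi,\quad w=e^{i\theta}\in\T.
\end{equation*}

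\textbf{Step 2 (bootstrap).} Assume inductively that $f\in C^{k+\alpha}(\T)$ for some $k\geq 1$. After the change of variables $\zeta=\Phi(\tau)$, one writes $u(\Phi(w))=\frac{\overline{\Phi(w)}}{2}-\mathrm{p.v.}\frac{1}{2\pi i}\int_{\T}\frac{\overline{\Phi(\tau)}\Phi'(\tau)}{\Phi(\tau)-\Phi(w)}d\tau$. By the same Wittmann $T(1)$-theorem argument as in subsection~3.2, but applied on the $C^{k+\alpha}$ Jordan curve $\Phi(\T)$, this principal-value operator is bounded on $C^{k+\alpha}(\T)$, and hence $u\circ\Phi\in C^{k+\alpha}(\T)$ and $\psi\in C^{k+\alpha}(\T)$. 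Since $|\psi|\geq c_0$, also $\arg\psi\in C^{k+\alpha}(\T)$, so $\arg\Phi'\in C^{k+\alpha}(\T)$. Now $\log\Phi'$ is analytic on $\C_\infty\setminus\overline{\Delta}$ with $\log\Phi'(\infty)=0$, so its boundary values on $\T$ have only strictly negative Fourier frequencies; the real part $\log|\Phi'|$ is recovered from $\arg\Phi'$ by the Hilbert transform, and since the Hilbert transform preserves $C^{k+\alpha}(\T)$ for $0<\alpha<1$, we conclude $\log\Phi'\in C^{k+\alpha}(\T)$, whence $\Phi'\in C^{k+\alpha}(\T)$. Since $d\Phi/d\theta=iw\Phi'(w)$, this means $\Phi\in C^{k+1+\alpha}(\T)$, i.e.\ $f\in C^{k+1+\alpha}(\T)$. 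Iterating from the base case $k=1$ yields $f\in C^{\infty}$.

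\textbf{Main obstacle.} The delicate points are (i) carrying out Step~1 cleanly, keeping track of all boundary contributions produced by Plemelj, and (ii) the harmonic conjugation step, which uses boundedness of the Hilbert transform on Hölder spaces but fails on the integer-order spaces $C^{k}$. The latter is precisely why the Hölder exponent $\alpha\in(0,1)$ cannot be dispensed with, and the smallness $\|f\|_{C^1}<\epsilon_0$ is used only to guarantee that $\psi$ does not vanish on $\T$, so that $\arg\psi$ is a well-defined $C^{k+\alpha}$ function.
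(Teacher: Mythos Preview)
Your Step~1 is correct and equivalent to the paper's: the identity $\operatorname{Im}(w\Phi'\psi)=0$ is just a rewriting of the paper's formula \eqref{eqq}, since $I_1=-u\circ\Phi$ (combine Plemelj with \eqref{identit123}). Your harmonic-conjugation argument in place of the Kellogg--Warschawski theorem is also a perfectly good alternative for passing from $\arg\Phi'\in C^{k+\alpha}$ to $\Phi'\in C^{k+\alpha}$.

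The gap is in Step~2, at the line ``hence $u\circ\Phi\in C^{k+\alpha}(\T)$.'' You write
\[
u(\Phi(w))=\frac{\overline{\Phi(w)}}{2}-\text{p.v.}\frac{1}{2\pi i}\int_{\T}\frac{\overline{\Phi(\tau)}\,\Phi'(\tau)}{\Phi(\tau)-\Phi(w)}\,d\tau
\]
and claim the principal-value term is in $C^{k+\alpha}$ by boundedness of the Cauchy integral on $C^{k+\alpha}$. But the density $\overline{\Phi(\tau)}\Phi'(\tau)$ lies only in $C^{k-1+\alpha}$, because $\Phi'\in C^{k-1+\alpha}$; a singular integral of order zero applied to a $C^{k-1+\alpha}$ function gives at best $C^{k-1+\alpha}$, not $C^{k+\alpha}$. (Wittmann's $T(1)$ theorem, incidentally, only addresses $C^{\alpha}$, not $C^{k+\alpha}$, though that is a secondary issue.) With this your bootstrap stalls at $k=1$: you never gain a derivative.

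This missing derivative is exactly the ``hidden smoothing effect'' the paper isolates. The claim $u\circ\Phi\in C^{k+\beta}$ (for $\beta<\alpha$) when $\Phi\in C^{k+\alpha}$ is true, but it does not follow from operator boundedness alone: it requires the special structure of the integrand. The paper proves it by differentiating $I_1=-u\circ\Phi$ directly and discovering the recursion $\frac{dI_n}{dw}=n\Phi' I_{n+1}$ (Lemma~\ref{deI}), where $I_n$ has a Taylor remainder of order $n-1$ in the numerator; the top-order $I_n$ is then placed in $C^{\beta}$ by the kernel estimates of Lemma~\ref{nuclitaylor}. An equivalent route would be to prove that the Beurling transform of $\chi_D$ is $C^{k-1+\alpha}$ up to the boundary when $\partial D\in C^{k+\alpha}$, but that is itself a nontrivial statement requiring the same kind of work. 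Either way, this smoothing step is the core of the argument and cannot be replaced by a citation of $T(1)$.
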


\begin{proof}[\it Outline of the proof]

Before plunging into the details we present a sketch of the proof of
Theorem \ref{teor12}. Burbea's equation for $V$-states is
$F(\lambda,f)(w) = 0, \; w\in\T,$ with
$$
F(\lambda,f)(w) = \lambda |w+f(w)|^{2} +2\operatorname{Re} S(f)(w) -
m(\text{Id}+f,\lambda), \quad w\in\T,
$$
where $S(f)$ is given by \eqref{ops1} and $m(\text{Id}+f,\lambda)$
by \eqref{mitjana}. Recall that the reason to subtract
$m(\text{Id}+f,\lambda)$ is that the integral of $F(\lambda,f)$ over
$\T$ be zero.

In the previous section we have used bifurcation theory to prove
that, given an integer $m \geq 2$ and $0<\alpha<1,$ there exists a
curve of $V$-states passing through $(1/m,0)$ and taking values in a
little neighborhood of  $(1/m,0)$ in $(0,\infty)\times V.$ Since $V$
is contained in $C^{1+\alpha}(\T)$ we conclude that the non-trivial
$V$-states we have found have boundary of class $C^{1+\alpha}(\T).$
The same approach can be adapted with slight modifications to the
space $C^{n+\alpha}(\T)$, for each positive integer $n$ and each
$0<\alpha<1$. This provides curves of solutions with boundaries of
class $C^{n+\alpha}.$ However the neighborhood of $(1/m,0)$
containing the curve of solutions decreases as $n$ increases and so
the $C^\infty$ regularity of the  boundary cannot be reached by
using Crandall-Rabinowitz as a black box. Since Crandall-Rabinowitz
depends essentially on the implicit function theorem, we thought of
resorting to Nash-Moser implicit function theorem for
$C^\infty(\T).$ Unfortunately we were not able to implement this
idea. We realized later that a simpler method works. The idea is to
differentiate the equation $F(\lambda,f)(w)=0$ with respect to $w$
and carefully study the resulting equation. We find a surprising
smoothing effect for the unit tangent field to the curve $\Phi(\T)$
which induces in turn a global smoothing effect for the conformal
mapping $\Phi.$
 To be more precise, we
compute $\frac{dF(\lambda,f)}{dw},$ which yields a formula for the
quotient
$$
q(w) = \frac{\overline{\Phi'(w)}}{\Phi'(w)}, \quad w\in \mathbb{T},
$$
 of the form
\begin{equation}\label{eqq}
q(w)=w^2\frac{(1-\lambda)\overline{\Phi(w)}+{I}_1(w)}{(1-\lambda){\Phi(w)}+\overline{{I}_1(w)}},\quad
w\in \mathbb{T}.
\end{equation}
where $I_1$ is the integral
\begin{equation}\label{I1}
I_1(w)=\frac{1}{2\pi
i}\int_{\mathbb{T}}\frac{\overline{\Phi(\tau)-\Phi(w)}}{\Phi(\tau)-\Phi(w)}\Phi^\prime(\tau)d\tau.
\end{equation}
As we know, a priori $\Phi \in C^{1+\alpha}(\T)$ and thus $q$ is
only in $C^\alpha(\T).$ But \eqref{eqq} suggests that $q$ might be
of class $C^{1+\alpha}(\T)$ provided $I_1(w)$ is. It is not
difficult to compute the derivative of $I_1$ and check that it is in
$C^\beta(\T), \; 0 < \beta < \alpha.$ Thus we get that $q \in
C^{1+\beta}(\T), \;  0 < \beta < \alpha,$ if the denominator in
\eqref{eqq} does not vanish in $\T.$ This is guaranteed by the
smallness condition $\|f\|_{C^1(\T)} <\epsilon_0.$ Now the
smoothness of $q$ is the same as that of the unit tangent vector to
the curve $\Phi(\T)$ and classical results on the smoothness of
conformal mappings yield that $\Phi \in C^{2+\beta}(\T), \;  0 <
\beta < \alpha.$  One can then iterate the argument and show that
under the assumption $\Phi \in C^{2+\beta}(\T), \;  0 < \beta <
\alpha$ the function $I_1$ can be differentiated twice with respect
to $w$ and the second derivative is in $C^\beta(\T), \; 0 < \beta <
\alpha.$ Thus $\Phi \in C^{3+\beta}(\T), \; 0 < \beta < \alpha.$
Since the iteration can be performed any number of times we conclude
that $\Phi \in C^\infty(\T).$ We begin now with the details of the
proof.
\end{proof}

\begin{proof}[Proof of Theorem \ref{teor12}]
To get \eqref{eqq} we differentiate the equation $F(\lambda,f)(w)=0$
with respect to $w$ and use \eqref{diff1}. We get
$$
\lambda\Big(\Phi^\prime(w)\overline{\Phi(w)}-\frac{1}{w^2}\overline{\Phi^\prime(w)}{\Phi(w)}\Big)+\frac{d
S(f)}{dw}(w)-\frac{1}{w^2}\overline{\frac{d S(f)}{dw}}(w)=0.
$$
Recall that the parameter $\lambda$ is taken in the interval
$]0,1[$. According to  \eqref{dessa} and \eqref{identit123},
\begin{eqnarray*}
\frac{d S(f)}{dw}(w)&=&-\Phi^{\prime}(w)\bigg(\overline{\Phi(w)}+ \frac{1}{2 \pi i}\int_{\T}\frac{\overline{\Phi(\tau)-\Phi(w)}}{\Phi(\tau)-\Phi(w)}\Phi^\prime(\tau)d\tau\bigg)\\
&:=&-\Phi^{\prime}(w)\Big(\overline{\Phi(w)}+{I}_1(w)\Big),\quad
w\in\T.
\end{eqnarray*}
%From the above definition of $I_1$ we see that
%\begin{eqnarray}\label{inft1}
%\nonumber\|I_1\|_{L^\infty(\T)}&\le& \|\Phi^\prime\|_{L^\infty(\T)}\\
%&\le& 1+\|f\|_{C^1(\T)}.
%\end{eqnarray}
Putting together the two preceding  identities and setting
$q(w):=\frac{\overline{\Phi^\prime(w)}}{\Phi^\prime(w)}$  we obtain
\eqref{eqq}.
 Let us show that the
denominator in \eqref{eqq} does not vanish on $\T$ if
$\|f\|_{C^1(\T)}$ is small enough. \\
Since $\Phi(w)=w + f(w), \; w\in\T, $ the denominator in \eqref{eqq}
is
\begin{equation}\label{denominator}
D(w)=(1-\lambda)\big(w+f(w)\big)+\overline{I_1(w)},\quad w\in
\mathbb{T}.
\end{equation}
Now
\begin{eqnarray}\label{I12}
\nonumber I_1(w)&=&\frac{1}{2\pi
i}\int_{\mathbb{T}}\frac{\overline{\tau-w}}
{\Phi(\tau)-\Phi(w)}\Phi^\prime(\tau)d\tau+\frac{1}{2\pi i}\int_{\mathbb{T}}\frac{\overline{f(\tau)-f(w)}}{\Phi(\tau)-\Phi(w)}\Phi^\prime(\tau)d\tau\\
\nonumber&=& - \frac{1}{w}\frac{1}{2\pi
i}\int_{\mathbb{T}}\frac{{\tau-w}}{\Phi(\tau)-\Phi(w)}\Phi^\prime(\tau)\frac{d\tau}{\tau}+J_1(w).
\end{eqnarray}
%\end{equation}
where $J_1$ is a notation for the second term and we have used the
identity
\begin{equation}\label{tao}
\overline{\tau-w}=-\frac{\tau-w}{\tau w},\quad \tau, \, w\in\T.
\end{equation}
 On the other hand, by Lebesgue dominated convergence Theorem we
 have
\begin{equation*}%\label{integralepsilon}
\frac{1}{2\pi
i}\int_{\mathbb{T}}\frac{{\tau-w}}{\Phi(\tau)-\Phi(w)}\Phi^\prime(\tau)\frac{d\tau}{\tau}=\lim_{\varepsilon\to
0}\frac{1}{2\pi
i}\int_{|\tau|=1+\varepsilon}\frac{{\tau-w}}{\Phi(\tau)-\Phi(w)}\Phi^\prime(\tau)\frac{d\tau}{\tau}.
\end{equation*}
The integral on the circle of radius $1+\varepsilon$ is $1$ for each
positive $\varepsilon,$ because the integrand is an analytic
function of $\tau$ in the exterior of the unit disc whose first term
in the expansion at $\infty$ is $1/\tau. $ Hence
$$
 I_1(w)=-\overline{w}+J_1(w), \quad  w\in \T.
$$
Plugging this identity into \eqref{denominator}
$$
D(w)=-\lambda w+(1-\lambda) f(w)+ \overline{J_1(w)}, \quad w\in\T.
$$
The estimate of the $L^\infty$ norm of $J_1$ can be easily performed
as follows
\begin{eqnarray*}
\|J_1\|_{L^\infty(\T)}&\le& \frac{\|f^\prime\|_{L^\infty(\T)}}{\inf_{\tau\neq w}\frac{|\Phi(\tau)-\Phi(w)|}{|\tau-w|}}\|\Phi^\prime\|_{L^\infty(\T)}\\
&\le&\frac{\|f^\prime\|_{L^\infty(\T)}}{1-\|f^\prime\|_{L^\infty(\T)}}\big(1+\|f^\prime\|_{L^\infty}\big).
\end{eqnarray*}
Therefore
%\begin{equation}\label{D}
\begin{eqnarray*}%\label{D}
|D(w)|&\geq& \lambda-(1-\lambda)\|f\|_{L^\infty(\T)}-\|\tilde{I}_1\|_{L^\infty(\T)}\\
&\ge&
\lambda-\|f\|_{\infty}-\|f^\prime\|_{\infty}\frac{1+\|f^\prime\|_{\infty}}{1-\|f^\prime\|_{\infty}}\\
&\geq& \lambda-2\frac{\|f\|_{C^1(\T)}}{1-\|f\|_{C^1(\T)}}\\
&\geq&\frac{1}{2}\lambda,
\end{eqnarray*}
%\end{equation}
where the last inequality holds provided $\|f\|_{C^1(\T)}
\le\frac{\lambda}{4+\lambda}.$

Let us  now prove  that the function $w\in\T\mapsto I_1(w)$ is more
regular than one would expect. Indeed, it belongs to the space $
C^{1+\beta}(\T) $ for any $\beta$ satisfying $0 < \beta < \alpha.$
Since the quotient $w\mapsto
\frac{\overline{\Phi(\tau)-\Phi(w)}}{\Phi(\tau)-\Phi(w)} $ extends
continuously to the diagonal of $\T$, in taking derivatives inside
the integral defining $I_1(w)$ no ``boundary terms" will appear.
Then it follows from \eqref{diff1} and \eqref{identit123} that
\begin{eqnarray}\label{deI1}
\nonumber\frac{dI_1}{dw}(w)&=&\frac{1}{w^2}\overline{\Phi^\prime(w)}\,\,\text{p.v.}\,\frac{1}{2
\pi i}\int_{\T}\frac{\Phi^\prime(\tau)}{\Phi(\tau)-\Phi(w)}d\tau \\
&+&  \Phi'(w)\,\, \text{p.v.}\,\frac{1}{2\pi
i}\int_{\T}\frac{\overline{\Phi(\tau)-\Phi(w)}}{(\Phi(\tau)-\Phi(w))^2}\Phi^\prime(\tau)d\tau\\
&=& \frac{\overline{\Phi^\prime(w)}}{2w^2}+\Phi^\prime(w) \;
\text{p.v.\ }\,\frac{1}{2\pi
i}\int_{\T}\frac{\overline{\Phi(\tau)-\Phi(w)}}{(\Phi(\tau)-\Phi(w))^2}\Phi^\prime(\tau)d\tau.
\end{eqnarray}
To obtain the appropriate H\"{o}lder estimate we are looking for it
is convenient to write the principal value integral above as the sum
of two terms by adding and subtracting
$\overline{\Phi'(w)}\overline{(\tau-w)} $ in the numerator of the
fraction.  We get
%\begin{equation}\label{I2}
\begin{eqnarray}\label{I2}
\nonumber\text{p.v.\ }\,\frac{1}{2\pi
i}\int_{\T}\frac{\overline{\Phi(\tau)-\Phi(w)}}{(\Phi(\tau)-\Phi(w))^2}\Phi^\prime(\tau)d\tau
\nonumber &=&\frac{1}{2\pi
i}\int_{\T}\frac{\overline{\Phi(\tau)-\Phi(w)-\Phi^\prime(w)(\tau-w)}}{(\Phi(\tau)-\Phi(w))^2}
 \Phi^\prime(\tau)d\tau\\
 &+ &\overline{\Phi^\prime(w)}\; \text{p.v.\ }\,\frac{1}{2\pi i}\int_{\T}\frac{\overline{\tau-w}}{(\Phi(\tau)-\Phi(w))^2}\Phi^\prime(\tau)d\tau
\end{eqnarray}
%\end{equation}
Call $I_2(w)$ the first term in the right-hand side above, namely,
$$
I_2(w):=\frac{1}{2\pi
i}\int_{\T}\frac{\overline{\Phi(\tau)-\Phi(w)-\Phi^\prime(w)(\tau-w)}}{(\Phi(\tau)-\Phi(w))^2}
 \Phi^\prime(\tau)d\tau,\quad w\in\T.
$$ Let us compute the
principal value integral in the second term of \eqref{I2}. By
\eqref{tao}
\begin{equation}\label{taoepsilon}
\text{p.v.\ }\,\frac{1}{2\pi
i}\int_{\T}\frac{\overline{\tau-w}}{(\Phi(\tau)-\Phi(w))^2}\Phi^\prime(\tau)d\tau=-\frac1w\,
\text{p.v.\ }\,\frac{1}{2\pi
i}\int_{\T}\Big(\frac{\tau-w}{\Phi(\tau)-\Phi(w)}\Big)^2\
\frac{\Phi^\prime(\tau)}{\tau}\frac{d\tau}{\tau-w}\cdot
\end{equation}
We compute the principal value integral above by the method used in
dealing with \eqref{tione}.  Denote by $\gamma_\epsilon,\;
\epsilon>0,$ the arc which is the intersection of the circle
centered at $w$ of radius $\epsilon$ and the complement of the open
unit disc, with the counter-clockwise orientation. Let $\T_\epsilon$
the closed Jordan curve consisting of the arc $\gamma_\epsilon$
followed by the part of the unit circle at distance from $w$ not
less than $\epsilon,$  traversed counterclockwise. Then the
principal value in \eqref{taoepsilon} is
\begin{eqnarray*}%\label{taoepsilon1}
\nonumber\text{p.v.\ }\,\frac{1}{2\pi
i}\int_{\T}\Big(\frac{\tau-w}{\Phi(\tau)-\Phi(w)}\Big)^2\
\frac{\Phi^\prime(\tau)}{\tau}\frac{d\tau}{\tau-w} &=&
  \lim_{\varepsilon\to 0}\,\frac{1}{2\pi
i}\int_{\mathbb{T}_\varepsilon}\Big(\frac{\tau-w}
{\Phi(\tau)-\Phi(w)}\Big)^2\,\frac{\Phi^\prime(\tau)}{\tau}\frac{d\tau}{\tau-w}\\
& -& \lim_{\varepsilon\to 0}\, \frac{1}{2\pi i}
\int_{\gamma_\varepsilon}\Big(\frac{\tau-w}{\Phi(\tau)-\Phi(w)}\Big)^2\,\frac{\Phi^\prime(\tau)}{\tau}\frac{d\tau}{\tau-w}.
\end{eqnarray*}
The integral on $\T_\epsilon$ is zero because the integrand is
analytic in the exterior of the unit disc and has a double zero at
$\infty.$ The limit of the integral on $\gamma_\epsilon$ is given by
$$
 \lim_{\varepsilon\to 0}\, \frac{1}{2\pi i}
\int_{\gamma_\varepsilon}\Big(\frac{\tau-w}{\Phi(\tau)-\Phi(w)}\Big)^2\,\frac{\Phi^\prime(\tau)}{\tau}\frac{d\tau}{\tau-w}=\frac{1}{2w\Phi^\prime(w)}\cdot
$$
Therefore
\begin{equation}\label{deItilde}
\frac{dI_1(w)}{dw}=\Phi^\prime(w)
I_2(w)+\frac{\overline{\Phi^\prime(w)}}{w^2},\quad  w\in\T.
\end{equation}
 Lemma \ref{nuclitaylor} below applied  for $n=2$ yields
$I_2 \in  C^\beta(\T),$ for each $\beta$ satisfying $  0 < \beta <
\alpha$ and hence we conclude that $w\mapsto \frac{dI_1}{dw}(w)$
belongs to the space $ C^{\beta}(\T),$ $ \;  0 < \beta < \alpha,$
that is, that $I_1 \in C^{1+\beta}(\T),$ $0 < \beta < \alpha.$ Thus
from the expression \eqref{eqq}, we find that
 \begin{equation}\label{smooth00}
 q \in C^{1+\beta}(\T),\quad  0 < \beta < \alpha.
 \end{equation}

Before dealing with Lemma \ref{nuclitaylor} we discuss how the
smoothness of $q$ translates into the same type of smoothness of
$\Phi'.$ More precisely, we will prove the following: for each
positive integer $n$ and $0<\beta<1,$
\begin{equation}\label{smooth1}
q \in C^{n+\beta}(\T;\T)\Longrightarrow \Phi \in
C^{n+1+\beta}(\T;\mathbb{C}).
\end{equation}
For this purpose, we first relate the regularity of the map
$w\mapsto q(w)$ to the smoothness of the Jordan curve $\Phi(\T)$
 and then we use the Kellogg-Warschawski Theorem \cite{WS} to get a global regularity result for the conformal map $\Phi.$

Using the conformal parametrization $\theta\in\R\mapsto
\Phi(e^{i\theta})$, we get easily the following formula for the unit
tangent vector $\vec{\tau}(\theta)$ to the curve $\Phi(\T)$ at the
point $\Phi(e^{i\theta}),$
$$
\vec{\tau}(\theta)=\frac{\frac{d}{d\theta}\Phi(e^{i\theta})}{|\frac{d}{d\theta}\Phi(e^{i\theta})|}=iw\frac{\Phi^\prime(w)}{|\Phi^\prime(w)|},\quad
w=e^{i\theta}\cdot
$$
Consequently
\begin{equation}\label{regul1}
[\vec\tau(\theta)]^2=-w^2\overline{q(w)},\quad w=e^{i\theta}.
\end{equation}
Since $\Phi$ belongs to $C^{1+\alpha},$ the map $\theta\mapsto
\vec{\tau}(\theta)$ must be in $C^\alpha(\mathbb{R}; \T)\subset
C(\mathbb{R};\T)$ and by the lifting theorem there exists a
continuous function $\phi:{\mathbb{R}}\to\mathbb{R},$ such that
$$
\vec\tau(\theta)=e^{i\phi(\theta)},\,\theta\in\mathbb{R}.
$$
Recall that we have established that $q\in C^{1+\beta}(\T),
0<\beta<\alpha$  and so $[\vec{\tau}]^2$ remains in  the same space.
Since the argument function $\phi$ can be recovered by the formula
$$
\phi(\theta)=\phi(0)-\frac12i\int_0^\theta\frac{\sigma^\prime(t)}{\sigma(t)}dt,\quad\quad
\hbox{with}\quad \quad \sigma(t):=[\vec{\tau}(t)]^2,
$$
 $\phi$ is in $C^{1+\beta}(\mathbb{R})$ and consequently
$\vec{\tau} \in C^{1+\beta}(\R).$ More generally, the preceding
formula for $\phi$ shows that, for each non-negative integer $n$ and
${\beta}\in]0,1[,$
$$
q\in C^{n+\beta}(\T;\T)\,\Longrightarrow\,\vec{\tau}\in
C^{n+\beta}(\mathbb{R};\T).
$$
Now we will use the Kellogg-Warschawski theorem \cite{WS}, which can
be also found in \cite[Theorem 3.6]{P}.  It asserts that if the
boundary $\Phi(\T)$ is a Jordan curve of class $C^{n+1+\beta},$ with
$n$ a non-negative integer and $ 0<\beta<1$, then the conformal map
$\Phi: \C\backslash\overline{\Delta}\to \C$ has a continuous
extension to $\C\backslash{\Delta}$  which is of class
$C^{n+1+\beta}.$  In other words,
$$
\vec{\tau}\in C^{n+\beta}(\mathbb{R};\T)\Longrightarrow \Phi \in
C^{n+1+\beta}(\T;\mathbb{C}).
$$
Combining \eqref{smooth00} and \eqref{smooth1} we obtain
$$
\Phi\in C^{2+\beta}(\T).
$$
%
%Using a branch of the logarithm in a neighbourhood
%of $q(\omega)$ we conclude that there is a branch of the argument of
%$q(\omega)$ which is locally as smooth as $q(\omega),$ say, it is in
%$C^\beta(\T).$ Then there is a branch $a(\omega)$ of the argument of
%$\Phi'(\omega)$ which is in $C^\beta(\T).$ Since $\Phi'(\tau)$ does
%not vanish on the exterior of the unit disc, there exists an
%analytic branch of $\log \Phi'(\tau).$  Its real part is
%$\log|\Phi'(\tau)|$ and its imaginary part a harmonic branch
%$a(\tau)$ of the argument of $\Phi'(\tau).$  Therefore
%$\log|\Phi'(\tau)|$ and  $a(\tau)$  are harmonic conjugate functions
%and so they have the same H\"{o}lder boundary smoothness. Hence
%$\log|\Phi'(\omega)|,$ or equivalently, $\log \Phi'(\omega)$ is in
%$C^\beta(\T).$ Thus $\Phi'(\omega)= \exp(\log \Phi'(\omega)) \in
%C^\beta(\T).$ Of course, the same argument gives that  $q \in
%C^{1+\beta}(\T)$ implies that $\Phi \in C^{2+\beta}(\T),$ or more
%generally, that $q \in C^{n-1+\beta}(\T)$ implies that $\Phi \in
%C^{n+\beta}(\T), \; n=1,2, \dots$
We are now ready to iterate the preceding argument. Assume that
$\Phi \in C^{n-1+\beta}(\T),$ \newline $0 < \beta < 1,$ for some $n
\ge 3.$ We are going to show that,
\begin{equation}\label{recursi}
\Phi \in C^{n+\gamma}(\T),\quad  0 < \gamma < \beta.
\end{equation}
This will complete the proof that $\Phi \in C^{\infty}(\T).$
\\We need the following general
lemma. %Later on we will see how $q \in
%C^{1+\beta}(\T)$ yields $\Phi \in C^{2+\beta}(\T).$\\
For $\Phi \in C^n(\T),$ let
$$
P_n(\Phi)(\tau,w)= \sum_{j=0}^n \frac{\Phi^{(j)}(w)}{j !} \,(\tau
-w)^j
$$
be the Taylor polynomial of degree $n$ of $\Phi,$ around the point
$w$, evaluated at the point  $\tau.$
\begin{lemma}\label{nuclitaylor}
Assume that $\Phi \in C^{n-1+\alpha}(\T),$\, for \,$n\geq 2,\,
0<\alpha<1.$ Let $T_n$ be the operator
\begin{equation}\label{teena}
T_ng(w)=\frac{1}{2\pi i}\int_{\T}K_n(w,\tau) g(\tau)d\tau, \quad
w\in\T,\quad g\in L^\infty(\mathbb{T}),
\end{equation}
with kernel
\begin{equation*}%\label{nucliteena}
K_n(w,\tau)=\frac{\overline{\Phi(\tau)-P_{n-1}(\Phi)(\tau,w)}}{(\Phi(\tau)-\Phi(w))^n}.
\end{equation*}
Then, for any $\beta$ satisfying $0 < \beta < \alpha,$
\begin{equation*}%\label{teenaliplog}
\|T_ng\|_{C^\beta(\mathbb{T})}\le C\|g\|_{L^\infty(\T)}.
\end{equation*}
\end{lemma}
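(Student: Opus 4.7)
The plan is to combine an integrable pointwise bound on $K_n$, which immediately yields the $L^\infty$ estimate, with a standard near/far splitting of the integration variable to obtain H\"older regularity of $T_ng$. I would begin with the size bound. Since $\Phi\in C^{n-1+\alpha}(\T)$, the integral form of the Taylor remainder gives
$$|R(\tau,w)|:=|\Phi(\tau)-P_{n-1}(\Phi)(\tau,w)|\le C|\tau-w|^{n-1+\alpha},\quad \tau,w\in\T,$$
while the bilipschitz lower bound $|\Phi(\tau)-\Phi(w)|\ge c|\tau-w|$ (valid since $\|f'\|_\infty<1$ on $V$) yields $|K_n(w,\tau)|\le C|\tau-w|^{\alpha-1}$. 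This is integrable in $\tau\in\T$ uniformly in $w$, so $\|T_ng\|_\infty\le C\|g\|_\infty$.

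For the H\"older seminorm I would fix $w_1,w_2\in\T$, set $h=|w_1-w_2|$, and split $\T$ into a near region $\{|\tau-w_1|\le 2h\}$ and a far region $\{|\tau-w_1|>2h\}$. On the near region $|\tau-w_2|\approx|\tau-w_1|$, so the size bound alone controls both $|K_n(w_i,\tau)|$ by $C|\tau-w_i|^{\alpha-1}$, and integration yields a contribution of order $h^\alpha\|g\|_\infty$. On the far region, where a smoothness estimate on the kernel is needed, I would decompose
$$K_n(w_1,\tau)-K_n(w_2,\tau)=\frac{\overline{R(\tau,w_1)-R(\tau,w_2)}}{(\Phi(\tau)-\Phi(w_1))^n}+\overline{R(\tau,w_2)}\Big[\frac{1}{(\Phi(\tau)-\Phi(w_1))^n}-\frac{1}{(\Phi(\tau)-\Phi(w_2))^n}\Big].$$
The second summand is routine: the bracket is $O(h|\tau-w_1|^{-(n+1)})$ by a telescoping mean-value estimate together with bilipschitzness, and multiplying by $|R(\tau,w_2)|\le C|\tau-w_1|^{n-1+\alpha}$ leaves $O(h|\tau-w_1|^{\alpha-2})$, whose integral over the far region is $O(h^\alpha)$.

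The main obstacle is the first summand, which requires the bound $|R(\tau,w_1)-R(\tau,w_2)|\le Ch^\alpha|\tau-w_1|^{n-1}$ on the far region. To obtain this I would use the integral representation
$$R(\tau,w)=\frac{(\tau-w)^{n-1}}{(n-2)!}\int_0^1(1-s)^{n-2}\bigl[\Phi^{(n-1)}(w+s(\tau-w))-\Phi^{(n-1)}(w)\bigr]ds$$
and decompose $R(\tau,w_1)-R(\tau,w_2)$ as the sum of a piece where the prefactors $(\tau-w_i)^{n-1}$ differ, estimated by $|(\tau-w_1)^{n-1}-(\tau-w_2)^{n-1}|\le Ch|\tau-w_1|^{n-2}$ multiplied by the $\alpha$-H\"older size $C|\tau-w_1|^\alpha$ of the integral, and a piece where the two integrals are compared, where the bracket $[\Phi^{(n-1)}(w_1+s(\tau-w_1))-\Phi^{(n-1)}(w_1)]-[\Phi^{(n-1)}(w_2+s(\tau-w_2))-\Phi^{(n-1)}(w_2)]$ is $O(h^\alpha)$ by applying the $\alpha$-H\"older bound of $\Phi^{(n-1)}$ to each of the four evaluations separately. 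Using $h\le|\tau-w_1|/2$ to absorb the factor $h$ into $h^\alpha|\tau-w_1|^{1-\alpha}$ converts both pieces into $Ch^\alpha|\tau-w_1|^{n-1}$. Dividing by $|\Phi(\tau)-\Phi(w_1)|^n\ge c|\tau-w_1|^n$ leaves $Ch^\alpha|\tau-w_1|^{-1}$, whose integral over $\{2h<|\tau-w_1|\}\cap\T$ is $Ch^\alpha\log(1/h)$. Collecting all contributions yields
$$|T_ng(w_1)-T_ng(w_2)|\le C\|g\|_\infty h^\alpha\bigl(1+\log(1/h)\bigr),$$
which for any $\beta\in(0,\alpha)$ absorbs the logarithm into $h^{\alpha-\beta}$ and gives the claimed bound $\|T_ng\|_{C^\beta(\T)}\le C\|g\|_\infty$.
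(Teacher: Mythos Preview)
Your proof is correct and follows essentially the same route as the paper: establish the size bound $|K_n(w,\tau)|\le C|\tau-w|^{\alpha-1}$ and the smoothness bound $|K_n(w_1,\tau)-K_n(w_2,\tau)|\le C|w_1-w_2|^\alpha|\tau-w_1|^{-1}$ on the far region, then carry out the standard near/far splitting to get $|T_ng(w_1)-T_ng(w_2)|\le C\|g\|_\infty h^\alpha(1+\log(1/h))$, which yields the $C^\beta$ estimate for every $\beta<\alpha$. The only genuine difference is in the mechanism used to control the numerator $|R(\tau,w_1)-R(\tau,w_2)|=|P_{n-1}(\Phi)(\tau,w_2)-P_{n-1}(\Phi)(\tau,w_1)|$: the paper uses the purely algebraic identity
\[
P_{n-1}(\Phi)(\tau,w_2)-P_{n-1}(\Phi)(\tau,w_1)=\sum_{j=0}^{n-1}\Bigl(\Phi^{(j)}(w_2)-P_{n-1-j}(\Phi^{(j)})(w_2,w_1)\Bigr)\frac{(\tau-w_2)^j}{j!},
\]
whose $j$th term is $O(|w_1-w_2|^{n-1-j+\alpha}|\tau-w_2|^j)$ and sums to $O(|w_1-w_2|^\alpha|\tau-w_1|^{n-1})$, whereas you use the integral form of the remainder and compare the two integrals. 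Both give the same bound; the paper's identity is slightly cleaner since it avoids having to track evaluation points of $\Phi^{(n-1)}$ along a path (your linear interpolation $w+s(\tau-w)$ leaves $\T$, so strictly speaking you should parametrize by arc length, but this is cosmetic).
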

\begin{proof}
The lemma is easily proven by standard methods, as in \cite[p.419
]{MOV}, once one knows that $K_n$ satisfies
\begin{equation}\label{kena1}
|K_n(w,\tau)\le C|\tau-w|^{\alpha-1},\quad \tau,w\in
\mathbb{T},\tau\neq w
\end{equation}
and
\begin{equation}\label{kena2}
|K_n(w_1,\tau)-K_n(w_2,\tau)|\le
C\frac{|w_1-w_2|^\alpha}{|\tau-w_1|}, \quad w_1,w_2 \in
\mathbb{T},\; |\tau-w_1|\geq 2|w_1-w_2|.
\end{equation}
It is obvious that \eqref{kena1} holds by Taylor's formula. For
\eqref{kena2} we write
%
%\begin{equation}\label{mesmenys}
%\frac{\overline{\Phi(\tau)-P_{n-1}(\Phi)(\tau-w_2)}}{(\Phi(\tau)-\Phi(w_1))^n}.
%\end{equation}
%so that the left hand side of \eqref{kena2} is estimated by
\begin{eqnarray*}%\label{demkena2}
%\quad
\nonumber|K_n(w_1,\tau)-K_n(w_2,\tau)|&\le&\bigg|\frac{\overline{P_{n-1}(\Phi)(\tau,w_1)}-\overline{P_{n-1}(\Phi)(\tau,w_2)}}{(\Phi(\tau)-\Phi(w_1))^n}
\bigg|\\
\nonumber& +& \Big|\Phi(\tau)-P_{n-1}(\Phi)(\tau,w_2)\Big|\,\Big|\frac{1}{(\Phi(\tau)-\Phi(w_1))^n}-\frac{1}{(\Phi(\tau)-\Phi(w_2))^n}\Big|\\
&:=& \hbox{I}+\hbox{II}.
\end{eqnarray*}
The term $\hbox{II}$ can easily be controlled via a gradient
estimate by
\begin{eqnarray*}%\label{dem2kena2}
|\hbox{II}|&\le&
 C|\tau-w_2|^{n-1+\alpha} \frac{|w_1-w_2|}{|\tau-w_1|^{n+1}}\\
&\le& C\frac{|w_1-w_2|^\alpha}{|\tau-w_1|}.
\end{eqnarray*}
We have used in the  last inequality the equivalence $
\frac12|\tau-w_1|\le|\tau-w_2|\le\frac32|\tau-w_1|$. The \mbox{term
$I$} is estimated by observing that there is an elementary formula
for the difference of two Taylor's polynomials around different
points $w_1$ and $w_2,$  namely,
\begin{equation}\label{diftaylor}
P_{n-1}(\Phi)(\tau,w_2)-P_{n-1}(\Phi)(\tau,w_1)=
\sum_{j=0}^{n-1}\Big(\Phi^{(j)}(w_2)-P_{n-1-j}
(\Phi^{(j)})(w_2,w_1)\Big)\frac{(\tau-w_2)^j} {j!}\cdot
\end{equation}
This follows easily  from the  identity,
\begin{eqnarray*}
P_{n-1}(\Phi)(\tau,w_1)&=&\sum_{j=0}^{n-1}\frac{(\tau-w_2)^j}{j!}\{\partial_\tau^{(j)}[P_{n-1}(\Phi)]\}(w_2,w_1)\\
&=&\sum_{j=0}^{n-1}\frac{(\tau-w_2)^j}{j!}\sum_{k=j}^{n-1}\frac{(w_2-w_1)^{k-j}}{(k-j)!}\Phi^{(k)}(w_1)\\
&=&\sum_{j=0}^{n-1}\frac{(\tau-w_2)^j}{j!}P_{n-1-j}(\Phi^{(j)})(w_2,w_1).
\end{eqnarray*}
Since $\Phi^{(j)}$ belongs to $C^{n-1-j+\alpha}(\mathbb{T})$ then
$$
\big| \Phi^{(j)}(w_2)-P_{n-1-j}(\Phi^{(j)})(w_2,w_1) \big|\le C|w_2-w_1|^{n-1-j+\alpha}.
$$
Combining this estimate with formula \eqref{diftaylor} yields
%\

\begin{eqnarray*}%\label{termeI}
\quad
|\hbox{I}|&\le& C\sum_{j=0}^{n-1}|w_2-w_1|^{n-1-j+\alpha}\frac{|\tau-w_2|^j}{j!}\frac{1}{|\tau-w_1|^n}\\
&\le&CÊ\frac{|w_1-w_2|^\alpha}{|\tau-w_1|}\cdot
\end{eqnarray*}
%\end{equation}
\end{proof}

In view of \eqref{eqq} the only task left is to show that $I_1 \in
C^{n-1+\gamma}(\T),\, 0 < \gamma<\beta,$ provided $\Phi \in
C^{n-1+\beta}(\T).$ Indeed, this will lead to $q\in
C^{n-1+\gamma}(\T)$ and, according to the discussion above on the
Kellog-Warschawski Theorem, we conclude that $\Phi\in
C^{n+\gamma}(\T).$ Now, in order to prove that $I_1 \in
C^{n-1+\gamma}(\T)$ we need to establish a recursive formula for the
higher order derivatives of $I_1$, which is the goal of the next
lemma.

\begin{lemma}\label{deI}
Let $\Phi \in C^{n+\beta}(\T), 0 < \beta < 1, n \ge 2,$ and set
\begin{equation*}%\label{In}
I_n(w):=\frac{1}{2\pi
i}\int_{\T}\frac{\overline{\Phi(\tau)-P_{n-1}(\Phi)(\tau,w)}}
{\big(\Phi(\tau)-\Phi(w)\big)^n}\Phi^\prime(\tau)d\tau,\quad w\in\T.
\end{equation*}
Then
\begin{equation}\label{deIn}
\frac{dI_n(w)}{dw}=n\Phi^\prime(w)I_{n+1}(w),\,w\in\T.
\end{equation}
\end{lemma}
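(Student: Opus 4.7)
The plan is to differentiate $I_n(w)$ under the integral sign and then algebraically identify $n\Phi'(w)I_{n+1}(w)$ in the resulting integrand, showing that the leftover pieces vanish by a contour-deformation argument in the exterior of the unit disc.

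As a preliminary observation, Taylor's theorem combined with $\Phi\in C^{n+\beta}(\T)$ gives the bound $R_{n-1}(\tau,w):=\Phi(\tau)-P_{n-1}(\Phi)(\tau,w)=O(|\tau-w|^{n})$, so the integrand
$$\varphi(\tau,w):=\frac{\overline{R_{n-1}(\tau,w)}\,\Phi'(\tau)}{(\Phi(\tau)-\Phi(w))^{n}}$$
of $I_n$ extends jointly continuously across $\tau=w$. The key algebraic identity underlying the proof is the Taylor-polynomial derivative relation
$$\frac{d}{dw}P_{n-1}(\Phi)(\tau,w)=\frac{\Phi^{(n)}(w)}{(n-1)!}(\tau-w)^{n-1},$$
which follows from a direct telescoping of the product rule in the definition of $P_{n-1}$. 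Combined with the paper's conjugation rule \eqref{diff1}, this yields
$$\partial_w\overline{R_{n-1}(\tau,w)}=\frac{\overline{\Phi^{(n)}(w)}}{(n-1)!\,w^{2}}\,\overline{(\tau-w)^{n-1}}.$$

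Formally applying the quotient rule expresses $\partial_w\varphi$ as a sum of two summands, each individually of order $|\tau-w|^{-1}$ near the diagonal. Substituting $R_{n-1}=R_{n}+\tfrac{\Phi^{(n)}(w)}{n!}(\tau-w)^{n}$ in the second summand splits off exactly $n\Phi'(w)$ times the integrand of $I_{n+1}(w)$; combined with the first summand, the leading $1/(\tau-w)$ singularities cancel explicitly, leaving a sum of order $|\tau-w|^{\beta-1}$, which is uniformly integrable and so provides the dominated-convergence justification for differentiating $I_n$ under the integral. After this interchange one obtains
$$\frac{dI_n}{dw}(w)=n\,\Phi'(w)\,I_{n+1}(w)+\frac{\overline{\Phi^{(n)}(w)}}{(n-1)!}\Big[\tfrac{1}{w^{2}}\mathcal{A}(w)+\Phi'(w)\,\mathcal{B}(w)\Big],$$
where $\mathcal{A}(w)$ and $\mathcal{B}(w)$ are the integrals over $\T$ of $\overline{(\tau-w)^{n-1}}\Phi'(\tau)/(\Phi(\tau)-\Phi(w))^{n}$ and $\overline{(\tau-w)^{n}}\Phi'(\tau)/(\Phi(\tau)-\Phi(w))^{n+1}$, respectively.

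To conclude I would show the bracket vanishes. Using the circle identity $\overline{\tau-w}=-(\tau-w)/(\tau w)$ removes the conjugates and turns $\mathcal{A}$ and $\mathcal{B}$ into principal value integrals of functions meromorphic in $|\tau|>1$ with only a simple pole at the boundary point $\tau=w$ and decaying like $\tau^{-n}$ and $\tau^{-n-1}$ at infinity, respectively. The $\T_\varepsilon,\gamma_\varepsilon$ deformation trick already used in the paper (cf.\ the derivation of \eqref{tione}) then shows, via deformation to a circle of radius $R\to\infty$, that the contributions from the closed curves $\T_\varepsilon$ vanish in the limit precisely because $n\geq 2$, leaving only the residue contributions at $\tau=w$; a short computation of these residues yields $\mathcal{A}(w)=(-1)^{n}/(2w^{2(n-1)}\Phi'(w)^{n-1})$ and $\mathcal{B}(w)=(-1)^{n+1}/(2w^{2n}\Phi'(w)^{n})$, which combine so that $\tfrac{1}{w^{2}}\mathcal{A}(w)+\Phi'(w)\,\mathcal{B}(w)=0$. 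The main obstacle is this final cancellation: although the residue computations are individually routine, one must keep careful track of signs, powers of $w$, $\tau$ and $\Phi'(w)$, and of principal-value orientations, and the hypothesis $n\geq 2$ is essential for the integral at infinity to drop out (it fails for $n=1$, which is precisely why the derivative \eqref{deItilde} of $I_1$ carries the extra term $\overline{\Phi'(w)}/w^{2}$).
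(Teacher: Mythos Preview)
Your proof is correct and follows essentially the same route as the paper's. Your integrals $\mathcal{A}(w)$ and $\mathcal{B}(w)$ are precisely the principal value integrals appearing in the paper's terms $T_2$ and $T_3$, and your residue computation via the $\T_\varepsilon,\gamma_\varepsilon$ deformation is exactly what the paper isolates as a separate Sublemma (formula \eqref{deInpv}); the cancellation $\tfrac{1}{w^2}\mathcal{A}+\Phi'\,\mathcal{B}=0$ is the paper's $T_2+T_3=0$.
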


\begin{remark}
Notice that formula \eqref{deI1}  for the derivative of $I_1$ falls
out of the scope of \eqref{deIn}. \vspace{0.2 cm} Indeed, it is a
fortunate fact that a compact formula as \eqref{deIn} can be found.
\end{remark}

The proof of the preceding lemma depends on the following
calculation.

\begin{sublemma}\label{sublemma}
Let $\Phi \in C^{1}(\T) $ and $n \ge 1.$ Then
\begin{equation}\label{deInpv}
\text{p.v.\ }\frac{1}{2\pi
i}\int_{\T}\frac{(\tau-w)^n}{\big(\Phi(\tau)-\Phi(w)\big)^{n+1}}\Phi^\prime(\tau)
\,\frac{d\tau}{\tau^n}= -\frac{1}{2\, \Phi^\prime(w)^nw^n},\quad
w\in \T.
\end{equation}
\end{sublemma}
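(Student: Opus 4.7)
The plan is to follow verbatim the contour-deformation method already used in the paper for \eqref{tione} and \eqref{taoepsilon}. I would begin by writing the principal value as
\[
\text{p.v.\ }\frac{1}{2\pi i}\int_{\T}g(\tau)\,d\tau
=\lim_{\varepsilon\to 0}\left(\frac{1}{2\pi i}\int_{\T_\varepsilon}g(\tau)\,d\tau
-\frac{1}{2\pi i}\int_{\gamma_\varepsilon}g(\tau)\,d\tau\right),
\quad g(\tau):=\frac{(\tau-w)^n\,\Phi'(\tau)}{(\Phi(\tau)-\Phi(w))^{n+1}\,\tau^n},
\]
with $\gamma_\varepsilon$ and $\T_\varepsilon$ exactly as defined before \eqref{tione}, and then treat the two pieces separately.

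First I would handle $\int_{\T_\varepsilon}g\,d\tau$ and show that it equals $0$. Since $\Phi$ is injective on $|\tau|\ge 1$ the only candidate singularities of $g$ in the closed exterior of $\overline\Delta$ are $\tau=w$ and $\tau=\infty$. After the deformation, $w$ lies in the bounded component of $\C\setminus\T_\varepsilon$ (it sits inside the little ``bulge'' bounded by $\gamma_\varepsilon$), so $g$ is analytic on the whole unbounded component exterior to $\T_\varepsilon$. From $\Phi(\tau)=\tau+O(1)$ and $\Phi'(\tau)=1+O(1/\tau^2)$ at infinity we compute
\[
g(\tau)=\frac{\tau^n(1+O(1/\tau))}{\tau^{n+1}(1+O(1/\tau))\,\tau^n}=\frac{1}{\tau^{n+1}}\bigl(1+O(1/\tau)\bigr),
\]
so the Laurent expansion of $g$ at $\infty$ starts at order $-(n+1)\le -2$. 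Deforming $\T_\varepsilon$ outward to a circle of radius $R\to\infty$, which is legal by Cauchy's theorem in the exterior domain, we see the coefficient of $1/\tau$ is zero, hence $\int_{\T_\varepsilon}g\,d\tau=0$. This is the key place where the sublemma differs from \eqref{tione}: the faster decay at infinity kills the contribution entirely.

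Next I would compute $\lim_{\varepsilon\to 0}\int_{\gamma_\varepsilon}g\,d\tau$ by a straightforward local parametrization $\tau=w+\varepsilon e^{i\phi}$, $d\tau=i\varepsilon e^{i\phi}d\phi$, where $\phi$ varies counterclockwise over an interval whose length tends to $\pi$ as $\varepsilon\to 0$ (the endpoints, determined by $|w+\varepsilon e^{i\phi}|=1$, satisfy $\phi-\arg w=\pm(\pi/2+O(\varepsilon))$). Plugging the Taylor expansions $\Phi(\tau)-\Phi(w)=\Phi'(w)(\tau-w)+O(\varepsilon^2)$ and $\Phi'(\tau)=\Phi'(w)+O(\varepsilon)$ into $g(\tau)\,d\tau$, every factor of $\varepsilon$ cancels exactly and the integrand reduces to
\[
\frac{i}{\Phi'(w)^n\,w^n}\bigl(1+O(\varepsilon)\bigr)\,d\phi.
\]
Integrating over the $\phi$-interval of length $\pi+O(\varepsilon)$ yields
\[
\lim_{\varepsilon\to 0}\frac{1}{2\pi i}\int_{\gamma_\varepsilon}g\,d\tau=\frac{1}{2\Phi'(w)^n\,w^n}.
\]

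Combining the two pieces gives the claimed identity
\[
\text{p.v.\ }\frac{1}{2\pi i}\int_{\T}\frac{(\tau-w)^n\,\Phi'(\tau)}{(\Phi(\tau)-\Phi(w))^{n+1}\,\tau^n}\,d\tau
= 0-\frac{1}{2\,\Phi'(w)^n\,w^n}
= -\frac{1}{2\,\Phi'(w)^n\,w^n}.
\]
There is no real obstacle: both sub-computations are routine. The only subtle point is the first one, namely verifying that $\int_{\T_\varepsilon}g\,d\tau=0$; one must note (as in the discussion surrounding \eqref{tione}) that the analyticity of $g$ is on the \emph{exterior} of $\T_\varepsilon$, not on its bounded interior (where $\Phi$ is not even defined), so that ``the pole at $\tau=w$'' plays no role and the answer is dictated solely by the $(n+1)$-fold vanishing at infinity.
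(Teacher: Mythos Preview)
Your proof is correct and follows essentially the same contour-deformation approach as the paper: split the principal value into the integral over $\T_\varepsilon$ (which vanishes by analyticity in the exterior and the zero of order at least $2$ at $\infty$) and the integral over $\gamma_\varepsilon$. The only cosmetic difference is in evaluating the $\gamma_\varepsilon$ piece: you parametrize directly by $\tau=w+\varepsilon e^{i\phi}$, while the paper observes that $\frac{1}{2\pi i}\lim_{\varepsilon\to 0}\int_{\gamma_\varepsilon}\frac{\Phi'(\tau)}{\Phi(\tau)-\Phi(w)}\,d\tau=\frac{1}{2}$ as the half-variation of $\log\Phi$ and then reads off the remaining factor $1/(\Phi'(w)^n w^n)$ from the continuous part of the integrand.
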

\begin{proof}
Consider again the closed Jordan curve $\T_\epsilon$ and the arc
$\gamma_\epsilon$ used to deal with \eqref{tione} and
\eqref{taoepsilon}. The principal value integral in the statement of
the sublemma is the limit, as $\epsilon$ tends to $0,$ of the sum of
two terms. The first is the integral over $\T_\epsilon$ of the
integrand in \eqref{deInpv},  which is zero because the integrand is
an analytic function of $\tau$ in the exterior of the unit disc with
a zero at $\infty$ of order at least $2.$  The second term is minus
the limit as $\epsilon$ tends to $0$ of the integral of the same
expression over the arc $\gamma_\epsilon.$ Since
$$
\frac{1}{2 \pi i} \lim_{\epsilon \rightarrow 0}
\int_{\gamma_\epsilon} \frac{\Phi'(\tau)}{\Phi(\tau)-\Phi(w)}\,d\tau
= \frac{1}{2 \pi i} \lim_{\epsilon \rightarrow 0}
\Delta_{\gamma_\epsilon} \log \Phi = \frac{1}{2}
$$
it is clear that the limit of the second term is $-1 / (2 \,
\Phi'(w)^n
 w^n).$ Here $\Delta_{\gamma_\epsilon} \log \Phi$ stands for
 the variation of $\log \Phi$ on the arc $\gamma_\epsilon.$
\end{proof}

\begin{proof}[Proof of Lemma \ref{deI}]
Since $\Phi \in C^{n}(\T),$ the fraction in the integrand in $I_n$
extends continuously to the diagonal of $\T$ taking the value
$(-1)^n \overline{\Phi^{(n)}(w)}\, \overline{w}^{2n}/ (n!
\,\Phi'(w)^n) .$ We can then take derivatives inside the integral
and the boundary terms arising in the integration by parts are zero.
Thus
\begin{eqnarray*}
\frac{dI_n}{dw}(w)&=&n\Phi^\prime(w)\,\text{p.v.\ } \frac{1}{2\pi
i}\int_{\T}\frac{\overline{\Phi(\tau)-P_{n-1}(\Phi)(\tau,w)}}{\big(\Phi(\tau)-\Phi(w)\big)^{n+1}}\Phi^\prime(\tau){d\tau}\\
&+&\frac{1}{n-1 !}\frac{\overline{\Phi^{(n)}(w)}}{w^2}\,\hbox{p.v.\
} \frac{1}{2\pi
i}\int_{\T}\frac{(\overline{\tau-w})^{n-1}}{\big(\Phi(\tau)-\Phi(w)\big)^{n}}\Phi^\prime(\tau){d\tau}\\
&:=&T_1(w)+T_2(w).
\end{eqnarray*}
The next move consists in adding and subtracting
$\overline{\Phi^{(n)}(w) (\tau-w)^n} /n!$ to the numerator of the
fraction in the integrand of $T_1(w).$ The result is
\begin{eqnarray*}
T_1(w)&=&n\Phi^\prime(w)\,\text{p.v.\ } \frac{1}{2\pi
i}\int_{\T}\frac{\overline{\Phi(\tau)-P_{n}(\Phi)(\tau,w)}}{\big(\Phi(\tau)-\Phi(w)\big)^{n+1}}\Phi^\prime(\tau){d\tau}\\
&+&
\Phi^\prime(w)\frac{{\overline{\Phi^{(n)}(w)}}}{n-1!}\,\,\hbox{p.v.}\,
\frac{1}{2\pi
i}\int_{\T}\frac{(\overline{\tau-w})^{n}}{\big(\Phi(\tau)-\Phi(w)\big)^{n+1}}\Phi^\prime(\tau){d\tau}\\
&=&n\Phi^\prime(w)
I_{n+1}(w)+\Phi^\prime(w)\frac{{\overline{\Phi^{(n)}(w)}}}{n-1!}\,\,\hbox{p.v.}\,
\frac{1}{2\pi
i}\int_{\T}\frac{(\overline{\tau-w})^{n}}{\big(\Phi(\tau)-\Phi(w)\big)^{n+1}}\Phi^\prime(\tau){d\tau}\\
&:=&n\Phi^\prime(w) I_{n+1}(w)+T_3(w)
\end{eqnarray*}

We claim that $T_2(w)+T_3(w) =0,$ which ends the proof of the lemma.
The only difficulty is to compute the principal value integrals,
which are the same except for a shift in the exponents of the
integrand. For instance, by \eqref{tao} and the sublemma we see that
the principal value integral in the term $T_3(w)$ is
\begin{eqnarray*}%\label{pvxte3}
\hbox{p.v.}\, \frac{1}{2\pi
i}\int_{\T}\frac{(\overline{\tau-w})^{n}}{\big(\Phi(\tau)-\Phi(w)\big)^{n+1}}\Phi^\prime(\tau){d\tau}&=&\frac{(-1)^n}{w^n}
\hbox{p.v.}\, \frac{1}{2\pi
i}\int_{\T}\frac{({\tau-w})^{n}}{\big(\Phi(\tau)-\Phi(w)\big)^{n+1}}
\Phi^\prime(\tau)\frac{d\tau}{\tau^n}\\
&=&\frac{(-1)^{n+1}}{2[\Phi^\prime(w)]^nw^{2n}}.
\end{eqnarray*}
This completes the proof of the Lemma \ref{deI}.
\end{proof}

We can now finish the proof of Theorem \ref{teor12}. Recall that our
assumption is $\Phi \in C^{n-1+\beta}(\T),$  where $ 0 < \beta <1$
and  $n \ge 3,$ and  we want to conclude that $I_1 \in
C^{n-1+\gamma}(\T),$ $0 < \gamma < \beta.$ We have already seen that
this will give  $\Phi \in C^{n+\gamma}(\T),\; 0 < \gamma < \beta. $
  We apply Lemma \ref{nuclitaylor} and we get
that $I_n= T_n(\Phi')$ belongs to  $C^\gamma(\T), \; 0 < \gamma <
\beta.$ Formula \eqref{deIn} in Lemma \ref{deI} readily yields
$I_{n-1} \in C^{1+\gamma}(\T).$ Iterating the use of \eqref{deIn} we
obtain $I_{2} \in C^{n-2+\gamma}(\T),$ and so, finally, by
\eqref{deI1}, $I_{1} \in C^{n-1+\gamma}(\T).$
\end{proof}

We end the paper with the following remark.

\begin{co}\label{convex}
For each integer $m \ge 3,$ there exists a small positive
$\epsilon_0 = \epsilon_0(m)$ such that if $f \in V$ defines an
$m$-fold V-state $D$,  $\|f\|_{C^1(\T)}< \epsilon_0$ and
$\|f\|_{C^2(\T)}< 1/2$, then $D$ is convex.
\end{co}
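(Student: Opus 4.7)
The plan is to reduce the convexity of $D$ to the monotonicity of the tangent angle of its boundary $\Phi(\T)$, and then verify this monotonicity by a one-line perturbative estimate. The hypothesis $\|f\|_{C^1(\T)} < \epsilon_0$ already lets us invoke Theorem \ref{teor12}, so $\Phi \in C^\infty(\T)$ and $\partial D = \Phi(\T)$ is a smooth Jordan curve; we may then freely use differential geometry.

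First I parametrize the boundary by $\theta \mapsto \Phi(e^{i\theta})$ (traversed counterclockwise) and write the unit tangent as $\vec{\tau}(\theta) = e^{i\phi(\theta)}$ for a continuous lift $\phi$. The standard characterization (Hopf's Umlaufsatz together with the convexity criterion for simple closed $C^2$ curves) says that $D$ is convex if and only if $\phi'(\theta) \ge 0$ for all $\theta$. A direct computation gives
$$\vec{\tau}(\theta) = \frac{iw\,\Phi'(w)}{|\Phi'(w)|}, \qquad w = e^{i\theta},$$
so that $\phi(\theta) = \pi/2 + \theta + \arg \Phi'(w)$, and differentiating with the identity $\operatorname{Im}(iz) = \operatorname{Re}(z)$ yields
$$\phi'(\theta) \;=\; 1 + \operatorname{Re}\!\left(\frac{w\,\Phi''(w)}{\Phi'(w)}\right) \;=\; 1 + \operatorname{Re}\!\left(\frac{w\,f''(w)}{1 + f'(w)}\right),$$
using $\Phi(w) = w + f(w)$.

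The concluding step is to bound the perturbation strictly below $1$ in absolute value. The assumption $\|f\|_{C^2(\T)} < 1/2$ gives $\|f''\|_{L^\infty(\T)} < 1/2$, while choosing $\epsilon_0$ sufficiently small (say $\epsilon_0 \le 1/4$) ensures $|1 + f'(w)| \ge 1 - \epsilon_0 \ge 3/4$. Hence
$$\left|\frac{w\,f''(w)}{1+f'(w)}\right| \;\le\; \frac{\|f''\|_{L^\infty(\T)}}{1 - \|f'\|_{L^\infty(\T)}} \;<\; \frac{1/2}{3/4} \;=\; \frac{2}{3},$$
so that $\phi'(\theta) \ge 1/3 > 0$ for all $\theta$. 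Therefore $\vec{\tau}$ turns strictly monotonically along $\Phi(\T)$ and $D$ is (strictly) convex.

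The main obstacle is merely bookkeeping: one needs $C^1$ smallness to both invoke Theorem \ref{teor12} (to justify pointwise differential calculus on $\partial D$) and to keep $1 + f'$ safely away from zero, while the $C^2$ smallness is used only to control the second-order term $f''$. Since any reasonable interpretation of $\|f\|_{C^2(\T)}$ dominates $\|f''\|_{L^\infty(\T)}$, no subtlety arises in reading the hypotheses. No principal value integrals or Hölder estimates enter the argument, which is ultimately just the observation that, near the unit circle, the second fundamental form of $\partial D$ is a small perturbation of that of $\partial \Delta$.
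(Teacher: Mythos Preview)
Your proof is correct and essentially identical to the paper's. Both reduce the question to the sign of $1 + \operatorname{Re}\!\big(w f''(w)/(1+f'(w))\big)$: you phrase this as nonnegativity of $\phi'(\theta)$, the paper as nonnegativity of the curvature $\kappa(\theta)$, but since $\phi'(\theta)=\kappa(\theta)\,|\Phi'(w)|$ these are equivalent, and the final perturbative bound is the same up to bookkeeping (you control $|f'|$ via $\epsilon_0$, the paper via $\|f\|_{C^2(\T)}$).
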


\begin{proof}
As it is well-known, if $D$ is a Jordan domain bounded by a smooth
Jordan curve of class $C^2$, then $D$ is convex if and only if the
curvature of the boundary curve does not change sign on the curve.
By Theorem \ref{teor12} we know that the boundary of our $V$-state
$D$ is of class $C^\infty.$ To compute the curvature at boundary
points we resort to the conformal parametrization
$\Phi(e^{i\theta}).$ The velocity vector (or tangent vector) and the
principal normal to the curve at the point $\Phi(w), w=e^{i\theta},$
are given by
$$
\vec{v}(\theta)=iw{\Phi^\prime(w)}\quad\quad \text{and} \quad\quad
\vec{n}(\theta)=-\frac{w \Phi^\prime(w)}{|\Phi^\prime(w)|}
$$
respectively. On the other hand, according to a well-known classical
formula for  the curvature $\kappa(\theta)$ at the point
$\Phi(e^{i\theta}),$ we have
$$
\kappa(\theta)=\frac{\operatorname{Re}\big({d_{\theta}^2\{\Phi(e^{i\theta})\}}\;\overline{\vec{n}(\theta)}\big)}{|\vec{v}(\theta)|^2}\cdot
$$
A  straightforward computation yields
$$
d_{\theta}^2\{\Phi(e^{i\theta})\}=-w\big(\Phi^\prime(w)+w\Phi^{\prime\prime}(w)\big).
$$
Thus the curvature is
$$
\kappa(\theta)=\frac{1}{|\Phi^\prime(w)|}\operatorname{Re}\Big(1+w\frac{\Phi^{\prime\prime}(w)}{\Phi^\prime(w)}\Big).
$$
Since $\Phi(w)=w+f(w)$
$$
1+w\frac{\Phi^{\prime\prime}(w)}{\Phi^\prime(w)}=1+ w
\frac{f^{\prime\prime}(w)}{1+f^\prime(w)}
$$
and so
$$
\operatorname{Re}\Big(1+w\frac{\Phi^{\prime\prime}(w)}{\Phi^\prime(w)}\Big)
\geq 1- \frac{|f^{\prime\prime}(w)|}{1-|f^\prime(w)|} \geq 1-
\frac{\|f\|_{C^2(\T)}}{1-\|f\|_{C^2(\T)}},
$$
which is non-negative if $\|f\|_{C^2(\T)} < 1/2.$
\end{proof}

The reader will find interesting information on conformal mappings
and convexity in Duren's book \cite{Du}.

\begin{gracies}
J. Mateu and J. Verdera are grateful to J.A. Carrillo for many enlightening conversations on fluid mechanics and to L. Vega for
suggesting that one should look at the special nature of the
equation to obtain our regularity result. They also acknowledge
generous support from the grants 2009SGR420 (Generalitat de
Catalunya) and MTM2010-15657 (Ministerio de Ciencia e
Innovaci\'{o}n).
\end{gracies}

\end{document}